\newcommand{\sspace}{\vspace{0.25cm}}
 \theoremstyle{plain}
\newtheorem{theor}{Theorem}[section]
\newtheorem{prop}[theor]{Proposition}
\newtheorem{lem}[theor]{Lemma}
\newtheorem{sublem}[theor]{Sublemma}
\newtheorem{cor}[theor]{Corollary}
\theoremstyle{remark}
\newtheorem{rem}[theor]{Remark}
\newtheorem{rems}[theor]{Remarks}
\newtheorem{Example}[theor]{Example}
\theoremstyle{plain}
\newtheorem{defi}[theor]{Definition}
\numberwithin{equation}{section}
\newcommand{\CC}{{\mathbb C}}
\newcommand{\RR}{{\mathbb R}}
\renewcommand{\SS}{{\mathbf S}}
\newcommand{\QQ}{{\mathbb Q}}
\newcommand{\FF}{{\mathcal{ F}}}
\newcommand{\ZZ}{{\mathbb Z}}
\renewcommand{\AA}{{\mathbf A}}
\newcommand{\G}{{\mathbf G}}
\newcommand{\HH}{{\mathcal H}}
\newcommand{\PP}{{\mathbf P}}
\newcommand{\MM}{{\mathbf M}}
\newcommand{\UU}{{\mathbf U}}
\newcommand{\NN}{{\mathbb N}}
\newcommand{\Ga}{\Gamma}
\newcommand{\Gr}{{\mathbf{Gr}}}
\newcommand{\ti}[1]{\mbox{$\tilde{#1} $}}
\newcommand{\ol}{\overline}
\newcommand{\wt}{\widetilde}
\newcommand{\lo}{\longrightarrow}
\newcommand{\sm}{{\rm sm}}
\newcommand{\End}{{\rm End}\,}
\newcommand{\Res}{{\rm Res}}
\newcommand{\Sh}{{\rm Sh}}
\newcommand{\ad}{{\rm ad}}
\newcommand{\GL}{{\rm \bf GL}}
\newcommand{\tr}{\textnormal{tr}}
\newcommand{\alg}{\textnormal{alg}}
\newcommand{\proj}{{\mathbb P}}
\newcommand{\diam}{{\hfill \nobreak} $\Box$}
\newcommand{\F}{\mathcal{F}}
\newcommand{\Vol}{\textnormal{Vol}}
\newcommand{\Aut}{\textnormal{Aut}}
\newcommand{\an}{\textnormal{an}}
\newcommand{\bH}{{\mathbf H}}
\newcommand{\diag}{\textnormal{diag}}
\def\Fp{\mathfrak{p}}
\def\Fg{\mathfrak{g}}
\newcommand{\cF}{\mathcal{F}}
\newcommand{\cA}{{\mathcal A}}
\newcommand{\cD}{{\mathcal D}}
\newcommand{\cR}{{\mathcal R}}
\newcommand{\cX}{{\mathcal X}}
\newcommand{\AAA}{{\mathbb A}}
\newcommand{\lto}{\longrightarrow}
\newcommand{\bF}{\mathbf{F}}
\newcommand{\SL}{{\mathbf{SL}}}
\newcommand{\Lie}{{\rm Lie}}
\newcommand{\Imm}{{\rm Im}}
\newcommand{\Ree}{{\rm Re}}
\newcommand{\mon}{\textnormal{mon}}
\newcommand{\Zar}{\textnormal{Zar}}
\newcommand{\class}{\textnormal{class}}
\newcommand{\Id}{\textnormal{Id}}
\begin{document}

\title{The hyperbolic Ax-Lindemann-Weierstra\ss\- conjecture}
\author{B. Klingler, E.Ullmo, A.Yafaev}
\thanks{Andrei Yafaev was supported by the ERC grant Project 307364 SPGSV}
\maketitle

\section{Introduction.}

\subsection{Bi-algebraic geometry and the Ax-Lindemann-Weierstra\ss\-
  property.}
Let $X$ and $S$ be complex algebraic varieties and suppose $\pi: X^\an \lo
S^\an$ is a complex analytic, {\em non-algebraic}, morphism between the associated complex
analytic spaces. In this situation the image $\pi(Y)$ of a generic
algebraic subvariety $Y \subset X$ is usually highly transcendental
and the pairs $(Y\subset X, V\subset S)$ of irreducible algebraic subvarieties such
that $\pi(Y) = V$ are rare and of particular geometric significance. We will say
that an irreducible subvariety $Y\subset X$ (resp. $V \subset S$) is
{\em bi-algebraic} if $\pi(Y)$ is an algebraic subvariety of $S$
(resp. any analytic irreducible component of $\pi^{-1}(V)$ is an
irreducible algebraic subvariety of $X$). Notice that $V\subset S$ is
bi-algebraic if and only if any analytic irreducible component of
$\pi^{-1}(V)$ is bi-algebraic.

\begin{Example} \label{ex1}
Let $\pi:= (\exp(2\pi i \cdot), \dots,\exp(2\pi i \cdot)) : \CC^n \lo
(\CC^*)^n$. One easily shows that an irreducible algebraic subvariety $Y \subset \CC^n$ (resp. $V \subset (\CC^*)^n)$) is bi-algebraic if
and only if $Y$ is a translate of a rational linear subspace of $\CC^n= \QQ^n
\otimes_\QQ \CC$ (resp. $V$ is a translate of a subtorus of
$(\CC^*)^n$). 
\end{Example}

\begin{Example} \label{ex2}
Let $\pi: \CC^n \lo A$ be the uniformizing map of a complex Abelian
variety $A$ of dimension $n$. One checks that an irreducible algebraic subvariety $V \subset A$ is bi-algebraic if
and only if $V$ is the translate of an Abelian subvariety of $A$
(cf. \cite[prop. 5.1]{UY1} for example).
\end{Example}

More generally, given $Y \subset X$ an algebraic subvariety, one may
ask for a description of the Zariski-closure $\overline{\pi(Y)}^\Zar$
of its image $\pi(Z)$. We will say that $\pi: X \lo S$ satisfy the
Ax-Lindemann-Weierstra\ss\- property if for any such $Y \subset X$ the
irreducible components of $\overline{\pi(Y)}^\Zar$ are
bi-algebraic. One checks that the Ax-Lindemann-Weierstra\ss\- property
is equivalent to the following: for any algebraic
subvariety $V \subset S$, any irreducible algebraic subvariety $Y$ of
$X$ contained in $\pi^{-1}(V)$ and maximal for this property is bi-algebraic.

\begin{Example}
In the situations of Example~\ref{ex1} and Example~\ref{ex2} Ax \cite{Ax} showed that $\pi:X
\lo S$ has the Ax-Lindemann-Weiertra\ss\- property. Namely:

- if $\pi:= (\exp(2\pi i \cdot), \dots,\exp(2\pi i \cdot)) : \CC^n \lo
(\CC^*)^n$ and $Y \subset \CC^n$ is an algebraic subvariety then any
irreducible component of $\overline{\pi(Y)}^\Zar$ is the translate of
a subtorus of $(\CC^*)^n$.

- if $\pi: \CC^n \lo A$ is the uniformizing map of a complex abelian
variety $A$ of dimension
$n$ and $Y \subset \CC^n$ is an algebraic subvariety then any
irreducible component of $\overline{\pi(Y)}^\Zar$ is the translate of
an Abelian subvariety of $A$. 
\end{Example}

\begin{rem}
Notice that Ax's theorem for $\pi:= (\exp(2\pi i \cdot), \dots,\exp(2\pi i \cdot)) : \CC^n \lo
(\CC^*)^n$ is the functional analog of the classical
Lindemann-Weierstra\ss\- transcendence theorem (\cite{L}, \cite{W})
stating that if $\alpha_1,
\dots, \alpha_n$ are $\QQ$-linearly independent algebraic numbers
then $e^{\alpha_1}, \dots, e^{\alpha_n}$ are algebraically
independent over $\QQ$. This explain our terminology.
\end{rem}

\subsection{The hyperbolic Ax-Lindemann-Weierstra\ss\- conjecture}

The main result of this paper is the proof of the
Ax-Lindemann-Weierstra\ss\- property for the uniformizing map $\pi: X
\lo S:=\Gamma \backslash X$ of any {\em arithmetic
  variety} $S$. Here $X$ denotes a Hermitian
symmetric domain and $\Gamma$ is any {\em arithmetic subgroup} 
of the real adjoint Lie group $G$ of biholomorphisms of $X$. This
means that there exists a semisimple $\QQ$-algebraic group $\G$ and a
surjective morphism with compact kernel $p: \G(\RR) \lo G$ such that $\Ga$ is commensurable with the
projection $p(\G(\ZZ))$  (cf. section~\ref{not} for the definition of
$\G(\ZZ)$ and \cite{mar} for a general reference on arithmetic
lattices). 

The Ax-Lindemann-Weierstra\ss\- property does not make sense directly
for $\pi$:  the arithmetic variety $S$ admits a natural structure of complex quasi-projective
variety via the Baily-Borel embedding \cite{BB} but the Hermitian symmetric domain $X$ is not a complex
algebraic variety. However $X$ admits a canonical
realisation  as a bounded symmetric domain $\cD \subset \CC^N$ (with
$N=\dim_\CC X$) (cf. \cite[\S II.4]{Sat3}). 

\begin{defi} \label{funddef}
We will say that a subset $Y\subset
\cD$ is an {\em irreducible algebraic subvariety} of $\cD$ if $Y$ is an
irreducible component of the analytic set $\cD \cap \wt{Y}$ where $\wt{Y}$ is an algebraic subset of $\CC^N$.
An algebraic subvariety of $\cD$ is then defined as a finite union of irreducible
algebraic subvarieties.
\end{defi}

With these definitions the
morphism $\pi$ is far from algebraic (in the simplest case where $\cD$ is the Poincar\'e disk
and $S$ is the modular curve, the map $\pi: \cD \lo S$ is 
the usual $j$-invariant seen on the disk) and it makes sense to study
the bi-algebraic subvarieties for $\pi$. In \cite{UY1} Ullmo and
Yafaev proved that the bi-algebraic subvarieties
of $S$ for $\pi$ are the {\it weakly special} ones, namely the
irreducible complex algebraic subvarieties of $S$ whose smooth locus is totally geodesic in $S$ endowed with its canonical
Hermitian metric.  

Our main result is the proof of the Ax-Lindemann-Weiertra\ss\-
property in this context:

\begin{theor} \label{AL}(The hyperbolic Ax-Lindemann-Weierstra\ss\- conjecture.)
Let $S= \Gamma \backslash \cD$ be an arithmetic variety
with uniformising map $\pi: \cD \lo S$. Let $Y\subset \cD$ be an
algebraic subvariety. Then any irreducible component of
the Zariski-closure $\overline{\pi(Y)}^\Zar$ of $\pi(Y)$ is weakly
special. 

Equivalently, let $V$ be an algebraic subvariety of $S$.
Irreducible algebraic subvarieties of $\cD$ contained in $\pi^{-1}V$
and maximal for this property are
precisely the irreducible components of the preimages of maximal weakly special 
subvarieties contained in $V$.
\end{theor}

\begin{rems}
\begin{itemize}
\item[(a)] 
The Ax-Lindemann-Weierstra\ss\- property in an
hyperbolic context was first proven by Pila in the case where $S$ is a product of
modular curves: cf. \cite[section 1.4 and theor. 6.8]{Pil}. It is a
crucial ingredient in Pila's proof of the Andr\'e-Oort conjecture for
product of modular curves. The
hyperbolic Ax-Lindemann-Weierstra\ss\- conjecture for the uniformizing
map of a general connected Shimura variety $S$ is stated in \cite[conjecture 1.2]{U2}, where Ullmo explains its role in the
proof of the Andr\'e-Oort conjecture. In \cite{UY2} Ullmo
and Yafaev prove Theorem~\ref{AL} in the special case where $S$ is
compact. In \cite{TP}, in part inspired
by \cite{UY2} and relying on \cite{PetStar},  Pila and Tsimerman proved Theorem~\ref{AL}
in the special case $S = \cA_g$, the moduli space of principally
polarised Abelian varieties of dimension $g$.
\item[(b)] Mok has a nice, entirely complex-analytic, approach to
  the hyperbolic Ax-Lindemann-Weierstra\ss\-
  conjecture. In the rank $1$ case his approach should extend some of the results of this
text to the case where $\Gamma$ is a non-arithmetic lattice. We refer to \cite{Mo1}, \cite{Mo2}
for partial results.
\item[(c)] We defined algebraic subvarieties of $X$ using the
  Harish-Chandra realisation $\cD$ of $X$ but we could have used as
  well any other {\em realisation} of $X$ in the sense of
  \cite[section 2.1]{U2}. Indeed morphisms of realisations are necessarily
  semi-algebraic, thus $X$ admits a canonical semi-algebraic structure and a canonical notion of algebraic subvarieties
  (cf. appendix~\ref{algebra} for details). Hence one can
  replace $\cD$ in Theorem~\ref{AL} by any other realisation of $X$,
  for example the Borel realisation (cf. \cite[p.52]{Mo}).
\end{itemize}
\end{rems}

\subsection{Motivation: the Andr\'e-Oort conjecture}

Let $(\G,X_\G)$ be a Shimura datum.
Let $X$ be a connected component of $X_\G$ (hence $X$ is a Hermitian symmetric domain).
We denote by $\G(\QQ)_{+}$ the stabiliser of $X$ in $\G(\QQ)$. Let $K_f$ be a compact open subgroup of 
$\G(\AAA_f)$, where $\AAA_f$ denotes the finite ad\`eles of $\QQ$ and let $\Gamma:= \G(\QQ)_{+}\cap K_f$ be the corresponding
congruence arithmetic lattice of $\G(\QQ)$.

Then the arithmetic variety $S:= \Gamma \backslash X$ is a component of the complex
quasi-projective Shimura variety
$$
\Sh_K(\G,X) := \G(\QQ)_{+} \backslash X \times \G(\AAA_f) / K_f\;\;.
$$
The variety $S$ contains the so-called special points and special
subvarieties (these are the weakly special subvarieties of $S$
containing one special point, we refer to \cite{De2} or \cite{MoMo} for the detailed definitions).
One of the main motivations for studying the
Ax-Lindemann-Weierstra\ss\-  conjecture is the Andr\'e-Oort conjecture
predicting that irreducible subvarieties of $S$ containing Zariski dense sets of special points are precisely 
the special subvarieties. The Andr\'e-Oort conjecture has been proved
under the assumption of the Generalised Riemann Hypothesis (GRH) by the authors of this
paper (\cite{UY0}, \cite{KY}).
Recently Pila and Zannier \cite{PZ} came up with a new proof of the Manin-Mumford conjecture for abelian varieties using the
flat Ax-Lindemann-Weierstra\ss\- theorem. This gave hope to prove the Andr\'e-Oort conjecture
unconditionally with the same strategy. In \cite{Pil} Pila succeeded in applying this
strategy to the case where $S$ is a product of modular curves (and more
generally, in the context of mixed Shimura varieties, when $S$ is a
product of modular curves, of elliptic curves defined over $\QQ$ and
of an algebraic torus $\G_{\textnormal{m}}^l$). Roughly speaking, the
strategy of \cite{Pil} consists of two main ingredients: the first is the problem 
of bounding below the sizes of Galois orbits of special points and the
second is the hyperbolic Ax-Lindemann-Weierstra\ss\- conjecture.
We refer to \cite{U2} for details on how the general hyperbolic
Ax-Lindemann-Weierstra\ss\- conjecture  and a good lower bound on the
sizes of Galois orbits of special points imply the full Andr\'e-Oort
conjecture. As a direct corollary of Theorem~\ref{AL} and the proof of
\cite[theor.5.1]{U2} one obtains:

\begin{cor} \label{cor}
The Andr\'e-Oort conjecture holds for $\cA_6^n$ for any positive
integer $n$.
\end{cor}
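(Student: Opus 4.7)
The plan is to invoke the reduction of \cite[theor.5.1]{U2}: the Andr\'e-Oort conjecture for a connected component $S$ of a Shimura variety follows from the conjunction of (i) the hyperbolic Ax-Lindemann-Weierstra\ss\- conjecture for $S$ and (ii) a sufficient polynomial lower bound, in terms of suitable discriminant invariants, for the sizes of the Galois orbits of the special points of $S$. It is therefore enough to verify both ingredients in the case $S = \cA_6^n$.

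Ingredient (i) is essentially a direct quotation of Theorem~\ref{AL}. Indeed $\cA_6^n$ is the arithmetic variety $\Ga \backslash \cD$ with $\cD = \HP_6^n$ (a product of $n$ copies of the Siegel upper half space of genus $6$, a Hermitian symmetric domain) and $\Ga$ the congruence arithmetic lattice $\Sp_{12}(\ZZ)^n$ sitting inside the semisimple $\QQ$-algebraic group $\Sp_{12}^n$. Theorem~\ref{AL} then yields the hyperbolic Ax-Lindemann-Weierstra\ss\- statement for $\cA_6^n$ without modification.

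Ingredient (ii) is the substantive input. Tsimerman has established, unconditionally, a polynomial lower bound for $|\Gal(\oQ/\QQ)\cdot s|$ whenever $s$ is a CM point of $\cA_g$ with $g \le 6$, the threshold $g \le 6$ reflecting the range in which the relevant instance of the Colmez conjecture for CM fields of degree at most $12$ is accessible through Masser--W\"ustholz isogeny estimates. To deduce the corresponding bound on $\cA_6^n$, one writes a special point as a tuple $(s_1,\dots,s_n)$ of CM points of $\cA_6$; projecting to each factor, controlling the compositum of the reflex fields, and assembling the factorwise bounds, one obtains a polynomial lower bound in the discriminant of the product that matches the exponent required by the Pila-Zannier machinery.

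Combining (i) and (ii) via \cite[theor.5.1]{U2} then produces the corollary. The main obstacle is the product step in (ii): the bound for a single factor is due to Tsimerman, but one must verify that the passage to $\cA_6^n$ preserves the precise shape of the polynomial estimate demanded by the o-minimal counting argument underlying \cite[theor.5.1]{U2}. This is essentially bookkeeping about the discriminant of a product CM structure in terms of those of its factors, but it is the only part of the argument that is not a direct application of Theorem~\ref{AL} or a black-box appeal to Tsimerman's theorem.
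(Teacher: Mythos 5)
Your proposal matches the paper's (one-line) argument: the corollary is obtained by feeding Theorem~\ref{AL} into the machinery of \cite[theor.5.1]{U2}, whose other essential unconditional input is Tsimerman's polynomial Galois-orbit lower bound for CM points of $\cA_g$, $g\leq 6$. Two small corrections to your surrounding commentary: the $g\leq 6$ threshold in Tsimerman's theorem does not come from proving instances of the Colmez conjecture via Masser--W\"ustholz (the Colmez route is a separate, later development for general $g$; for $g\leq 6$ the Faltings-height input is obtained directly), and the ``product step'' you flag as the main residual obstacle is not actually left open here---it is part of what \cite[theor.5.1]{U2} already carries out, so the paper's deduction really is a direct citation once Theorem~\ref{AL} is in hand.
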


Notice also that (as explained in \cite{U2}) a new proof of the
Andr\'e-Oort conjecture under the GRH, alternative to \cite{UY0} and
\cite{KY},  
is a consequence of three ingredients: Theorem~\ref{AL}, a lower bound
under GRH for the size of 
Galois orbits of special points (provided by Tsimerman \cite{Tsi} in the
case of $\mathcal{A}_g$ and by Ullmo-Yafaev \cite{UY3} in general) and
an upper bound for the height of special points in Siegel sets. This
upper-bound has been announced by C.Daw and M.Orr \cite{DawOrr}.

\subsection{Strategy of the proof of Theorem~\ref{AL}.}

 Our general strategy for proving Theorem~\ref{AL}, which originates
  in \cite{Pil}, is also the one used in \cite{UY2} and \cite{TP}: it
  ultimately relies on estimations of rational points in
  transcendental real-analytic varieties or more generally in spaces
  definable in a o-minimal structure. Let us describe roughly this
  strategy and emphasize the new ideas involved.

(i) Let $S := \Gamma \backslash X$ and $\pi \colon X \lto S$ be the
uniformising map. Even though the map $\pi$ is transcendental, it
still enables us to relate the semi-algebraic structures on $X$ and
$S$ through a larger o-minimal structure. We refer to \cite{VDD},
\cite{VdM}, \cite[section 3]{UY2} for details on o-minimal structures.
Recall that a fundamental set  for the action of 
$\Gamma$ on $X$ is a connected open subset $\cF$ of $X$
such that $\Gamma \overline{\cF}= X$ and such that the set
$
\{\gamma\in \Gamma \ \vert \gamma\cF \cap \cF \neq \emptyset\}
$
is finite. Our first result of independent interest is the following:

\begin{theor} \label{fset}
There exists a semi-algebraic fundamental set $\cF$ for the action of $\Gamma$ on $X$
such that the restriction $\pi_{|\cF} \colon \cF \lto S$ is definable in the o-minimal structure $\RR_{\an,\exp}$.
\end{theor}

\begin{rems}
\begin{itemize}
\item[(a)]
The special case of Theorem~\ref{fset} when $S$ is compact is easy and was proven in \cite[Prop.4.2]{UY2}. In this case, the map $\pi_{|\cF}$ is even definable in $\RR_{\an}$.
Theorem~\ref{fset} in the case where $X= \HH_g$ is the  Siegel upper
half plane of genus $g$ was proven by Peterzil and Starchenko (see \cite{PetStar} and
\cite{PetStar1}): in this case they use an explicit description for
$\pi$ in terms of $\theta$-function and delicate computations with
these. Their result is a crucial ingredient in \cite{TP}. Notice
moreover that this particular
case implies Theorem~\ref{fset} for any special subvariety $S$ of
$\cA_g$ (see Proposition 2.5 of \cite{U2}). 
\item[(b)] On the other hand Peterzil and Starchenko's method does not generalize to general arithmetic varieties, where an
explicit description of $\pi$ is not available. Moreover, while the
definability of $\pi$ restricted to $\cF$ is of geometric essence, the geometric
meaning of computations with $\theta$-functions is difficult to follow. On the contrary
our general proof of Theorem~\ref{fset} is completely geometric: it
  relies on the general theory of toroidal compactifications of arithmetic
  varieties (cf. \cite{AMRT}). In particular it does not use
  \cite{PetStar} or \cite{PetStar1}.
\end{itemize}
\end{rems}

(ii) Choose a semi-algebraic fundamental set $\cF$ for the action of
$\Gamma$ as in the Theorem~\ref{fset} above. The choice of a reasonable representation $\rho: \G
\lto \GL(E)$ (cf. section~\ref{not}) allows us to define a {\em height function}
$H: \Ga \lo \RR$ (cf. definition~\ref{height}).
In section~\ref{growth} we show the following result, which is the
most original part of the proof (it mixes the geometry of toroidal
compactifications and various arguments from hyperbolic geometry, like theorem~\ref{lowerbound} of Hwang-To):

\begin{theor} \label{gro}
Let $Y$ be a positive dimensional irreducible algebraic subvariety of $X$. Define $$N_Y(T) = |\{ \gamma \in \Ga : H(\gamma)\leq T, \,Y\cap \gamma \cF \not= \emptyset \}| \;\;.$$
Then there exists a positive constant $c_1$ such that for all positive
real number $T$ large enough:
$$
N_Y(T) \geq T^{c_{1}} \;\;.
$$
\end{theor}

\begin{rem}
When $S$ is compact Ullmo and Yafaev proved in \cite[theor. 2.7]{UY2} a more
refined result. Indeed let $F:= \{\gamma \in \cF, \; \gamma
\overline{\cF} \cap \overline{\cF} \not= 0\}$ be a finite symmetric set of generators
for $\Ga$ and let $l: \Gamma \lo \NN$ be the word length
function on $\Gamma$ associated to $F$. Then Ullmo and Yafaev show that the
function $N_Y(n):= \left | \{ \gamma \in \Gamma, \; \dim (\gamma \cF
  \cap Y) = \dim Y \; \text{and}\; l(\gamma)\leq n\} \right |$ grows
  exponentially with $n\in \NN$ and Theorem~\ref{gro} follows in this
case. We were not able to obtain such a result in the general case.
\end{rem}

(iii) In section~\ref{stabil}, applying the counting result above and
some strong form of Pila-Wilkie's theorem \cite{PW}, we prove:
\begin{theor} \label{stab}
Let $V$ be an algebraic subvariety of $S$ and $Y$ a maximal
irreducible algebraic
subvariety of $\pi^{-1}V$. Let  $\Theta_Y$ denotes the stabiliser of $Y$
in $\G(\RR)$ and  define $\bH_Y$ as the connected component of the
identity of the Zariski closure of
$\G(\ZZ) \cap \Theta_Y$.
Then $\bH_Y$ is a non-trivial $\QQ$-subgroup of $\G$, such that
$\bH_Y(\RR)$ is non-compact.
\end{theor}

(iv)
Without loss of generality one can assume that $V$ is the smallest
algebraic subvariety of $S$ containing $\pi(Y)$. With this assumption
we show in section~\ref{final} that $\wt{V}$ is invariant
under $\bH_Y(\QQ)$, where $\wt{V}$ is an analytic irreducible
component of $\pi^{-1}V$ containing $Y$, and then conclude that
$\pi(Y)=V$ is weakly special 
using monodromy arguments.

\section{Notations} \label{not}
In the rest of the text:
\begin{itemize}
\item $X$ denotes a Hermitian symmetric domain (not necessarily irreducible).
\item $G$ is the adjoint semi-simple real algebraic group, whose set
  of real points, also denoted by $G$, is the group of biholomorphisms of $X$; hence $X= G/K$ where
  $K$ is a maximal compact subgroup of $G$.
\item $\Ga \subset G$ is an arithmetic lattice. This means (cf. \cite{mar}) that there exists a semi-simple linear algebraic group $\G$
over $\QQ$ and $p:\G(\RR) \lo G$ a surjective morphism with compact kernel such that $\Ga$ is
commensurable with $p(\G(\ZZ))$. Here we recall that two subgroups of a group
are commensurable if their intersection is of finite index in both of
them; moreover $\G(\ZZ)$ denotes $\G(\QQ)
\cap \rho^{-1}(\GL( E_\ZZ))$ for some faithful representation $\rho: \G \lto
\GL(E)$, where $E$ is a finite-dimensional $\QQ$-vector space and $E_\ZZ$ is
a $\ZZ$-lattice in $E$; the commensurability of $\Ga$ and $p(\G(\ZZ))$ is
independant of the choice of $\rho$ and $E_\ZZ$.
\item We denote by $n$ the dimension of $E$ as a $\QQ$-vector space.
\item 
One easily checks that Theorem~\ref{AL} holds for $\Ga$ if and only if
it holds for any $\Ga'$ commensurable with $\Ga$.
In particular without loss of generality one can and will assume that the group $\G(\ZZ)$ is neat (meaning that for any $\gamma \in \G(\ZZ)$ the group
generated by the eigenvalues of $\rho(\gamma)$ is torsion-free) and
the group $\Ga$ coincides with $p(\G(\ZZ))$ (hence is torsion-free).

\item Without loss of generality we can and will assume 
that {\em the group $\G$ is of adjoint type}. Indeed let $\lambda \colon \G \lto \G^\ad$ denotes the natural algebraic morphism to the adjoint group
 $\G^\ad$ of $\G$ (quotient by the centre).  As the Lie group $G$ is adjoint the morphism
 $p:\G(\RR) \lo G$ factorises through
\begin{equation*}
\xymatrix{
\G(\RR) \ar[r]^\lambda \ar[dr]_{p} & \G^\ad(\RR) \ar[d]^{p^\ad} \\
&G}
\end{equation*} and $\Gamma$ is commensurable with $p^\ad(\G^{\ad}(\ZZ))$.

\item Without loss of generality we can  and will assume that {\em each
  $\QQ$-simple factor of $\G$ is $\RR$-isotropic}. Indeed let
  $\bH$ be the quotient of $\G$ by its $\RR$-anisotropic $\QQ$-factors. Again, the morphism
  $p:\G(\RR) \lo G$ factorises through $\bH(\RR)$ and $\Ga$ is
  commensurable with the projection of $\bH(\ZZ)$.
\item The group $K_{\infty}:= p^{-1} K$ is a maximal compact subgroup of $\G(\RR)$. Hence
  $X= \G(\RR)/K_{\infty}$. We denote by $x_0$ the base-point $eK_{\infty}$ of $X$.

\item The quotient $S:= \Gamma \backslash X$ is a smooth complex
  quasi-projective variety. We denote by $\pi: X \lo S$ the
  uniformization map.
\item We choose $\| \cdot \|_\infty: E_\RR \lo \RR$
a Euclidean norm which is $\rho(K_\infty)$-invariant.
\item We denote by $\cX$ any realization of $X$ (cf. appendix~\ref{algebra}).
\end{itemize}

\section{Compactification of arithmetic varieties} \label{compact}

\subsection{Siegel sets} First we recall the definition of Siegel
sets for $\Ga$. We refer to \cite[\S 12]{bor} for details. We follow Borel's conventions,
except that for us the group $G$ acts on $X$ on the left.

Let $\PP$ be a minimal $\QQ$-parabolic subgroup of $\G$ such that
$K_{\infty} \cap \PP(\RR)$ is a maximal compact subgroup of
$\PP(\RR)$. Let $\UU$ be the unipotent radical of $\PP$
and let $\AA$ be a maximal split
torus of $\PP$. We denote by $\SS$ a maximal split torus of $\GL(E)$
containing $\rho(\AA)$. We denote by $\MM$ the maximal anisotropic subgroup of
the connected centralizer ${\mathbf Z}(\AA)^0$ of $\AA$ in $\PP$ and
by $\Delta$ the set of positive simple roots of $\G$
with respect to $\AA$ and $\PP$. We denote by $A\subset \SS(\RR)$ the real torus
$\AA(\RR)$. For any real number $t>0$ we let
$$
A_t := \{a \in A \; | \; a^\alpha \geq t \; \textnormal{for any}
\;\alpha \in \Delta 
\}\;\;.
$$

A Siegel set  for $\G(\RR)$ for the data
$(K_\infty, \PP, \AA)$ is a product:
$$ \Sigma'_{t, \Omega} : = \Omega \cdot A_t \cdot K_{\infty} \subset \G(\RR)\;$$
where $\Omega$ is a compact neighborhood of $e$ in $\MM^0(\RR) \cdot
\UU(\RR)$.

The image
$$
\Sigma_{t, \Omega} : = \Omega \cdot A_t \cdot x_o\subset \cX
$$
of $\Sigma'_{t, \Omega}$ in $\cX$ is called a Siegel set in $\cX$.

\begin{theor} \cite[theor.13.1]{bor} \label{siegel}
Let $X$, $G$, $\G$, $\Ga$, $\PP$, $\AA$, $K_{\infty}$, and $\cX$ be as above.
Then for any Siegel set $\Sigma_{t, \Omega} $, 
the set $\{\gamma\in \Gamma \; \vert \; \gamma\Sigma_{t, \Omega} \cap\Sigma_{t, \Omega} \neq \emptyset\}$
is finite. 
There exist a Siegel set (called a Siegel set for $\Gamma$)
$\Sigma_{t_{0}, \Omega} $ and a finite subset $J$ of $\G(\QQ)$ such that
$\cF:= J \cdot\Sigma_{t_{0}, \Omega} $ is a fundamental set 
for the action of $\Gamma$ on $\cX$.
\end{theor}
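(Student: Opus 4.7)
The theorem is the classical reduction theory for arithmetic subgroups of semisimple $\QQ$-groups, so I would follow Borel's strategy, combining the Langlands decomposition $\PP(\RR) = \MM(\RR) \cdot A \cdot \UU(\RR)$ with the integrality properties of $\Gamma = p(\G(\ZZ))$ under the faithful representation $\rho \colon \G \lto \GL(E)$. The two claims of the theorem are then proved by essentially independent arguments.

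For the finiteness of $\{\gamma \in \Gamma \mid \gamma \Sigma_{t,\Omega} \cap \Sigma_{t,\Omega} \neq \emptyset\}$, I would first translate the intersection condition into an identity in $\G(\RR)$: for such a $\gamma$ there exist $\omega_i \in \Omega$, $a_i \in A_t$, $k \in K_\infty$ with $\gamma = \omega_2 a_2 k a_1^{-1} \omega_1^{-1}$. Expanding this in an $\SS$-weight basis of $E$ expresses the matrix coefficients of $\rho(\gamma)$ in terms of the compact data $\omega_i, k$ and the weights $a_i^\beta$. The \emph{Siegel property} then states that imposing $\rho(\gamma) \in \GL(E_\ZZ)$ in this equation admits only finitely many solutions $\gamma \in \Gamma$: integrality forces the entries of $\rho(\gamma)$ into a bounded set in terms of $t$ and $\Omega$, and since $\rho$ is injective and $\rho(\Gamma)$ is discrete in $\GL(E_\RR)$, finiteness follows.

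For the existence of the fundamental set $\cF$, the plan is to take finitely many $\G(\QQ)$-translates of a single well-chosen Siegel set. Two structural results are needed. First, the set of $\Gamma$-conjugacy classes of minimal $\QQ$-parabolic subgroups of $\G$ is finite, a form of finite class number for the semisimple $\QQ$-group $\G$. Second, for each such class and for $t_0$ small enough and $\Omega$ large enough, a Siegel set attached to a representative parabolic covers a neighborhood of the corresponding cusp of $\Gamma \backslash \cX$. Writing the representatives as $\PP_i = g_i \PP g_i^{-1}$ with $g_i \in \G(\QQ)$ and setting $J := \{g_1, \dots, g_r\}$, the set $J \cdot \Sigma_{t_0, \Omega}$ satisfies $\Gamma \cdot (J \cdot \Sigma_{t_0, \Omega}) = \cX$, and the first part of the theorem applied to each $g_i \Sigma_{t_0, \Omega}$ supplies the required finite-intersection property.

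The main obstacle is the covering statement used in the previous paragraph. Classically it is proved by induction on the $\QQ$-rank of $\G$: for a point $x \in \cX$ one considers the simple-root values of its Langlands $A$-coordinate and acts by representatives of the Weyl group lying in $\Gamma \cap \PP(\QQ)$ to move these values into the dominant region defining $A_{t_0}$. Terminating this iterative ``steepest descent'' procedure requires non-trivial discreteness and convexity arguments, and handling the interaction between different cusps relies on a detailed analysis of the $\QQ$-spherical building of $\G$ together with the arithmetic reduction theory of Borel--Harish-Chandra.
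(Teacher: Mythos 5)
The paper does not prove this theorem: it is imported verbatim from Borel's \emph{Introduction aux groupes arithm\'etiques} (Th\'eor\`eme 13.1 for the fundamental set, with the finiteness statement being essentially Th\'eor\`eme 15.4), and the paper applies it as a black box. So there is no in-paper proof to compare against; the only question is whether your sketch faithfully captures Borel's argument.

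Your two-part skeleton (Siegel property for the finiteness; finite class number of rational minimal parabolics plus a covering lemma for the fundamental set) is the correct shape, but both parts are asserted rather than proved, and the assertions are not quite right as stated. For the Siegel property, the sentence ``integrality forces the entries of $\rho(\gamma)$ into a bounded set in terms of $t$ and $\Omega$'' does not follow from the equation $\gamma = \omega_2 a_2 k a_1^{-1}\omega_1^{-1}$ by any routine step: the entries of $a_2 k a_1^{-1}$ are generally unbounded as $a_1,a_2$ range over $A_t$, since $A_t$ only constrains simple-root values from below. Borel's actual mechanism is a weight-theoretic squeeze: expressing the matrix coefficients of $\rho(\gamma)$ on an $\SS$-weight basis exhibits them as bounded multiples of monomials $a_2^{\mu}a_1^{-\nu}$; nonzero integral coefficients are therefore $\geq 1$ in absolute value, and applying this simultaneously to $\gamma$ and $\gamma^{-1}$ (dualising $a_1\leftrightarrow a_2$) produces two-sided bounds on each $a_i^\alpha$, after which discreteness gives finiteness. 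That lemma is the content of the theorem and cannot be replaced by ``integrality forces boundedness.'' For the fundamental set, the covering lemma is acknowledged as the main obstacle but not argued, and the proposed mechanism --- ``steepest descent'' by Weyl-group representatives lying in $\Gamma\cap\PP(\QQ)$ --- is off: Weyl representatives do not in general lie in $\PP$, and Borel's argument instead descends via successive minima of the lattice $E_\ZZ$ under $\rho(\Ga)$ (transferring the problem to $\GL_n$), combined with induction on the $\QQ$-rank. As written, the proposal reformulates the theorem in a finer decomposition rather than supplying a proof of its two nontrivial ingredients.
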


When $\Omega$ is chosen to be semi-algebraic the Siegel set 
$\Sigma_{t, \Omega}$ and the fundamental set $\cF$ are semi-algebraic as by definition of a complex
realisation (cf. appendix~\ref{algebra}) 
the action of $\G(\RR)$ on $\cX$ is semi-algebraic and  the subset 
$\Omega \cdot A_t$ of $\G(\RR)$ is semi-algebraic.

{\em We will
only consider semi-algebraic Siegel sets in the rest of the text.}

\subsection{Boundary components}  General references for this section
and the next one are \cite{MuMu} and \cite{AMRT}.

Let $\cD \hookrightarrow \CC^N$ be the Harish-Chandra realisation of
$X$ as a bounded symmetric domain.
The action of $G$ extends to the closure $\ol{\cD}$ of $\cD$ in $\CC^N$. The boundary
$\partial \cD:= \ol{\cD}\backslash \cD$ is a smooth manifold which
decomposes into a (continuous) union of {\em boundary components},
which are defined as maximal complex analytic submanifolds of $\partial
\cD$ (or alternatively as holomorphic path components of $\partial
\cD$). Explicitly, let us say that a real affine hyperplane $H
\subset \CC^N$ is a supporting hyperplane if $H \cap \ol{\cD}$ is
nonempty but $H \cap \cD$ is empty. Let $H$ be a supporting hyperplane
and let $\ol{F} = H \cap \ol{\cD} = H \cap \partial \cD$. Let $L$ be
the smallest affine subspace of $\CC^N$ which contains $\ol{F}$. Then
$\ol{F}$ is the closure of a nonempty open subset $F \subset L$ which
is then a single boundary component of $\cD$ (cf. \cite[\S III.8.11]{Sat3}). The boundary component $F$ turns out to be a bounded
symmetric domain in $L$. 

Fix a boundary component $F$. The normaliser $N(F):= \{ g \in G \; \vert \; gF
= F\}$ turns out to be a proper parabolic subgroup of $G$. The Levi
decomposition $N(F) = R(F) \cdot W(F)$ (where $W(F)$ denotes the
unipotent radical of $N(F)$ and $R(F)$ is the unique reductive Levi
factor stable under the Cartan involution corresponding to $K$) can be
refined into
\begin{equation} \label{stabilizer}
N(F) = (G_h(F) \cdot G_l(F) \cdot M(F)) \cdot V(F) \cdot U(F)
\;\;,
\end{equation}
where:

- $U(F)$ is the centre of $W(F)$. It is a real vector space;

- $V(F) = W(F) /U(F)$ turns out to be abelian. It is a real vector
space of even dimension $2l$, and we get a
decomposition $W(F) = V(F) \cdot U(F)$ using ``exp'';

- $G_l(F) \cdot M(F) \cdot V(F) \cdot U(F)$ acts trivially on $F$ and
$G_h(F)$ modulo a finite center is $\Aut^0(F)$;

- $G_h(F) \cdot M(F) \cdot V(F) \cdot U(F)$ commutes with $U(F)$ and
$G_l(F)$ modulo a finite central group acts faithfully on $U(F)$ by
inner automorphisms;

- $M(F)$ is compact.

The boundary component $F$ is said to be {\em rational}  if
$\Gamma_F:= \Ga \cap
N(F)$ is an arithmetic subgroup of $N(F)$. There are only finitely
many $\Gamma$-orbits of rational boundary components, we choose
 representatives $F_1, \dots, F_r$ for these $\Ga$-orbits. 
Then the Baily-Borel compactification of $S$
is
$$
\overline{S}^{BB}=S \cup \bigcup_{i=1}^{r} (\Gamma_{F_{i}}\backslash F_{i})
$$
with a suitable analytic structure.

\subsection{Toroidal compactifications and local coordinates}
Let $X^\vee$ be the compact dual of $X$ and $\cD \hookrightarrow
X^\vee$ be the Borel embedding. Recall that $X^\vee$ has an algebraic
action by $G_\CC$. Given a boundary component $F$ of $\cD$ we define,
following \cite[section 3]{MuMu}, an open subset $\cD_F$ of $X^{\vee}$ containing $\cD$ as follows:
$$
\cD_F = \bigcup_{g \in U(F)_{\CC}} g \cdot \cD\;\;.
$$
The embedding of $\cD$ in $\cD_F$ is Piatetskii-Shapiro's realisation of
$\cD$ as Siegel Domain of the third kind. In fact there is a canonical
holomorphic isomorphism (we refer to the proof of Lemma~\ref{def_j}
for a precise description of this isomorphism):
$$
\cD_F \stackrel{j}{\simeq} U(F)_{\CC} \times \CC^l \times F \;\;.
$$
This biholomorphism defines complex  coordinates $(x, y, t)$ on $\cD_F$, such that 
$$
\cD\stackrel{j}{\simeq} \{ (x,y,t)  \in U(F)_{\CC} \times \CC^l \times
F \; | \; \Imm(x) + l_t(y,y) \in C(F)\}\subset \cD_{F}
$$
where $ \Imm(x)$ is the imaginary part of $x$, $C(F) \subset U(F)$ is
a self-adjoint convex cone homogeneous under the $G_l(F)$-action on
$U(F)$ and $l_t \colon \CC^l \times \CC^l \lto U(F)$
is a symmetric $\RR$-bilinear form varying real-analytically with $t \in F$.
The group $U(F)_{\CC}$ acts on $\cD_{F}$ and 
in these coordinates the action of $a\in U(F)(\CC)$ is given by:
$$
(x,y,t) \lto (x+a , y, t).
$$

From now on we fix a $\Gamma$-admissible collection of polyhedra
$\boldsymbol{\sigma}=(\sigma_{\alpha})$ (cf. \cite[definition 5.1]{AMRT})
such that the associated toroidal
compactification $\overline{S} = \overline{S}_{\boldsymbol{\sigma}}$ constructed in
\cite{AMRT} is smooth projective and the complement $\ol{S} \setminus
S$ is a divisor with normal crossings. We refer to \cite{AMRT}
for details and we just recall what is needed for our purposes.

The compactification $\ol{S}$ is covered by a finite set of
coordinates charts constructed as follows (cf. \cite[p.255-256]{MuMu}):

(a) Take a rational boundary component $F$ of $\cD$;

(b) We may choose some complex coordinates $x=(x_{1},\dots,x_{k})$ on $U(F)_\CC$
(depending on the choice of $\boldsymbol{\sigma}$) such that the following diagram
commutes:
\begin{equation} \label{exp}
\xymatrix{
\cD \ar[d] \ar@{^(->}[r] & \cD_F \stackrel{j}{\simeq} U(F)_\CC \times \CC^l \times F
\ar[d]^{\exp_F}  & \\
\exp_F(\cD) = \Gamma\cap U_{F}\backslash \cD \ar[d]_{\pi_F} \ar@{^(->}[r] &
\CC^{*k}\times \CC^l \times F \ar@{^(->}[r] & \CC^{k}\times \CC^l \times F \\
S & &
}
\end{equation}
where 
$
\exp_{F}: U(F)_\CC \times \CC^l \times F \rightarrow \CC^{*k}\times \CC^l \times F
$
is given by 
\begin{equation} \label{exp_F}
(x,y,t)\mapsto (\exp(2i \pi x),y,t), \; \textnormal{where} \; \exp(2i \pi
  x)=(\exp(2i \pi x_{1}),\dots,\exp(2i \pi x_{k}))\;\;.
\end{equation}

(c) Define the ``partial compactification of $\exp_F(\cD)$ in the
direction $F$'' to be the set $\exp_F(\cD)^{\vee}$ of 
points $P$ in $\CC^{k}\times \CC^l \times F$ having a neighborhood $\Theta$
such that 
$$
\Theta\cap \CC^{*k}\times \CC^l \times F\subset \exp_F(\cD) \;\;.
$$
Then there exists an integer $m$, 
$1\leq m\leq k$, such that $\exp_F(\cD)^{\vee}$
contains
$$
S(F,\boldsymbol{\sigma})=\cup_{i=1}^{m}\{ (z,y,t)\vert z=(z_{1},\dots,z_{k}), z_{i}=0\}.
$$

(d) The basic property of $\ol{S}$ is that the covering map
$\pi_F:\exp_F(\cD) \rightarrow S$ extends to a local
homeomorphism
$
\overline{\pi_F}: \exp_F(\cD)^{\vee}\rightarrow \overline{S}
$
making the diagram
\begin{equation} \label{factor}
\xymatrix{
\cD \ar[d]^{\exp_F}  \ar@/_4pc/[dd]_{\pi}& \\
\exp_F(\cD) \ar[d]^{\pi_F}
\ar@{^(->}[r] & \exp_F(\cD)^\vee \ar[d]^{\ol{\pi}_F} \\
S  \ar@{^(->}[r] & \ol{S}
}
\end{equation}
commutative.
Moreover every point $P$ of $\overline{S}-S$
is of the form $\overline{\pi}_F((z,y,t))$
with $z_{i}=0$ for some $i\leq m$, for some $F$.

The following proposition summarizes
what we will need:
\begin{prop}\label{OSiegel}
Let $\Sigma=\Sigma_{t,\Omega} \subset \cD$ be a Siegel set for the action of $\Gamma$.
Then $\Sigma$ is covered by a finite number of open subsets $\Theta$ having the 
following properties. For each $\Theta$ there is a rational boundary component
$F$, a simplicial cone $\sigma\in \boldsymbol{\sigma}$ with  $\sigma\subset \overline{C(F)}$, a point 
$a\in C(F)$,  relatively compact subsets $U'$, $Y'$ and $F'$ of
$U(F)$, $\CC^{l}$ and $F$ respectively such that the set $\Theta$ is of the form
\begin{equation*}
\begin{split}
\Theta &\stackrel{j}{\simeq} \{ (x,y,t) \in U(F)_{\CC}\times \CC^{l}\times F  ,  \, \Ree (x)\in U', y \in Y', t \in F' \;
\vert \; \Imm(x)+l_{t}(y,y)\in \sigma+a\} \\
& \subset U(F)_{\CC}\times \CC^{l}\times F \stackrel{j^{-1}}{\simeq} \cD_F \;\;.
\end{split}
\end{equation*}
\end{prop}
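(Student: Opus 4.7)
The plan is to translate the description $\Sigma = \Omega \cdot A_t \cdot x_0$ into explicit conditions in the Piatetskii-Shapiro coordinates on $\cD_F$ for rational boundary components $F$ attached to $\PP$, and then invoke the $\Gamma$-admissibility of $\boldsymbol{\sigma}$ to cover the resulting conic region by finitely many translated simplicial cones.

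Since $\PP$ is a minimal $\QQ$-parabolic, it sits inside a maximal chain of proper rational parabolics $N(F_0) \supset N(F_1) \supset \cdots$ corresponding to a flag of rational boundary components $F_0 \subset \overline{F_1} \subset \cdots$; for each $F = F_i$ the split torus $\AA$ contains a distinguished subtorus $A(F)$ acting on $C(F) \subset U(F)$. Fixing such an $F$, I would compute the image of an element $\omega \cdot a \cdot x_0 \in \Sigma$ in the $(x,y,t)$-coordinates on $\cD_F$, using the refined Langlands decomposition (\ref{stabilizer}) of $N(F)$ and the explicit action of its factors: $U(F)$ acts by $\RR$-translation of $x$, $V(F)$ acts affine-quadratically on $(x,y)$, $G_l(F)$ acts linearly on $U(F)$ and preserves $C(F)$, while $G_h(F) \cdot M(F)$ fixes $x$ and acts on $(y,t)$. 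Writing $x_0 = j^{-1}(x_0^F, y_0^F, t_0^F)$ and $v_0 := \Imm(x_0^F) + l_{t_0^F}(y_0^F, y_0^F) \in C(F)$, the relative compactness of $\Omega$ confines $\Ree(x), y, t$ to relatively compact sets $U', Y', F'$ while
\[
\Imm(x) + l_t(y,y) \;=\; a \cdot v_0 \;+\; \text{bounded perturbation},
\]
with $a \in A_t$ forcing $a \cdot v_0$ into a conic sector of $C(F)$ going to infinity in the direction of the $A(F)$-simple roots of $\PP$.

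Let $\Lambda \subset \overline{C(F)}$ denote the set of values of $\Imm(x) + l_t(y,y)$ attained as $(\omega, a) \in \Omega \times A_t$ varies. By the previous paragraph $\Lambda$ is an infinite conic sector up to bounded perturbation. The $\Gamma$-admissibility of $\boldsymbol{\sigma}$ is equivalent to having a $\Gamma_F$-invariant rational polyhedral decomposition of $\overline{C(F)}$ with only finitely many $\Gamma_F$-orbits of cones; in particular one can pick a shift $a_0 \in C(F)$ large enough that $\Lambda \subset a_0 + \overline{C(F)}$, together with a finite subcollection $\sigma_1, \ldots, \sigma_m$ of simplicial cones in $\boldsymbol{\sigma}$ with each $\sigma_i \subset \overline{C(F)}$, such that $\Lambda \subset \bigcup_i (a_0 + \sigma_i)$. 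Setting
\[
\Theta_i := \{ j^{-1}(x,y,t) : \Ree(x) \in U',\; y \in Y',\; t \in F',\; \Imm(x) + l_t(y,y) \in \sigma_i + a_0 \}
\]
yields a finite cover of the part of $\Sigma$ ``attracted'' to $F$ by open sets of the required form; iterating over the finitely many $F$'s in the chain attached to $\PP$ covers all of $\Sigma$.

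The main obstacle will be the last step: producing the shift $a_0 \in C(F)$ and the finite subcollection of cones in $\boldsymbol{\sigma}$ covering $\Lambda$. This is a finiteness statement about the interaction between the Weyl chamber structure on $C(F)$ coming from $A(F)$ and the rational polyhedral decomposition $\boldsymbol{\sigma}$, and combines the local finiteness of $\boldsymbol{\sigma}$ with the fact that the translated sector $\Lambda - a_0$ is contained in a region of $\overline{C(F)}$ meeting only finitely many cones of $\boldsymbol{\sigma}$. Once this is in place the proposition reduces to direct unpacking of the Piatetskii-Shapiro coordinates and the structure of Siegel sets via (\ref{stabilizer}).
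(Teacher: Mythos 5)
Your strategy is recognizably different from the paper's. The paper first works in the real-analytic group-theoretic coordinates $\Psi\colon \cD \simeq W(F)\times C(F)\times F$ from \cite[p.233]{AMRT}, where the Siegel set description is natural, and then cites the reduction-theoretic result \cite[p.266, corollary of proof]{AMRT}: the Siegel set $\Sigma$ is covered by finitely many sets of the form $\omega_F\times (C_0\cap\sigma_\alpha^F)\times E$ in $\Psi$-coordinates. The remaining work is then to transfer from $\Psi$- to $j$-coordinates, which the paper does by observing that $j\circ\Psi^{-1}$ is a $U(F)_\CC$-equivariant change of trivialisation of the bundle $\cD_F \to \cD'_F \to F$ respecting the fibration over $F$. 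You instead try to compute the image of $\Omega\cdot A_t\cdot x_0$ directly in $j$-coordinates and then cover the resulting conic region. That is a more hands-on computation; the cost is that you must reproduce (rather than cite) the core finiteness input of reduction theory.

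And that is precisely where the gap is. You yourself flag as ``the main obstacle'' the step of producing the shift $a_0\in C(F)$ and the finite subcollection $\sigma_1,\dots,\sigma_m\in\boldsymbol{\sigma}$ covering the set $\Lambda$ of attained values of $\Imm(x)+l_t(y,y)$. This is exactly the content of the AMRT reduction theorem the paper invokes, and you do not prove it. The informal justification you give --- that $\Lambda-a_0$ lies in a region of $\overline{C(F)}$ meeting only finitely many cones of $\boldsymbol{\sigma}$ --- does not obviously hold: $\Lambda$ is unbounded (it is a conic sector going to infinity), and a $\Gamma$-admissible decomposition $\boldsymbol{\sigma}$ generically consists of infinitely many cones (only finitely many $\Gamma_F$-orbits), so ``local finiteness'' alone does not give what you need. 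What makes it work is the specific relationship, established in \cite{AMRT}, between the Siegel direction attached to the minimal parabolic $\PP$, the rational core $C_0\subset C(F)$, and the admissible decomposition. Until that finiteness is either proved or cited, the argument is incomplete. A secondary, smaller caveat: the explicit action of the factors of \eqref{stabilizer} on the $(x,y,t)$-coordinates (e.g.\ the claim that $G_h(F)\cdot M(F)$ ``fixes $x$'') requires the Wolf--Koranyi computations; it is plausible in outline but would need the same level of verification the paper carries out in lemma~\ref{def_j} for the map $j$ itself.
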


\begin{proof}
Let us provide a proof of this proposition, essentially stated without
proof in \cite[p.259]{MuMu}.
Let $\cD \stackrel{\Psi}{\simeq} W(F) \times C(F) \times F$ be the real-analytic
isomorphism deduced from the group-theoretic
isomorphism~(\ref{stabilizer}) constructed in
\cite[p.233]{AMRT}. Following \cite[p.266, corollary of
proof]{AMRT}, the Siegel set $\Sigma$ is covered by a finite
number of sets $\Theta$ of the form
$$ \Theta \stackrel{\Psi}{\simeq} \omega_F \times (C_0 \cap \sigma_\alpha^F) \times E\;\;,$$
where $E\subset F$ and $\omega_W \subset W(F)$ are compact, $C_0
\subset C(F)$ is a rational core and $\sigma^F_\alpha$ is one of the
polyhedra in our decomposition of $C(F)$. 

Considering $C(F)$ as a cone in $\sqrt{-1}\cdot U(F)$ and decomposing $W(F)$ as $U(F) \cdot V(F)$, the isomorphism $\Psi$
extends to the real-analytic isomorphism $\cD_F \stackrel{\Psi}{\simeq} U(F)_\CC
\times V(F) \times F$ constructed in \cite[p.235]{AMRT}. Hence the Siegel set $\Sigma$ is covered by a finite
number of sets $\Theta$ of the form
\begin{equation} \label{exp_Psi}
 \Theta \stackrel{\Psi}{\simeq} \Psi(\cD) \cap \{(x, s, t) \in U(F)_\CC
  \times V(F) \times F \quad | \quad \Ree(x) \in U', s \in S', t \in
  F' \}
\end{equation}
where $F'\subset F$, $U'\subset U(F)$ and $S' \subset V(F)$ are
relatively compact. 

Using the definition of $j$ given in \cite[\S7]{WK} and recalled in the
proof of Lemma~\ref{def_j} below, it follows, as stated in
\cite[p.238]{AMRT}, that the diffeomorphism $j \circ \Psi^{-1}: U(F)_\CC \times V(F) \times F \simeq U(F)_\CC \times
\CC^l \times F$ is a change of trivialisation of the real-analytic bundle 
$$
\xymatrix{
\cD_F \ar[d]^{\pi'_F} \ar@/_2pc/[dd]_{\pi_F}\\
\cD'_F \ar[d]^{p_F} \\
F
}
$$
studied in \cite[p.237]{AMRT}.
Here the map $\pi'_F$ is a $U(F)_\CC$-principal homogeneous space, the map
$p_F$ is a $V(F)$-principal homogeneous space, and the map $j \circ
\Psi^{-1}$ is $U(F)_\CC$-equivariant and respects the fibrations over $F$.
These two properties ensure that $j \circ \Psi^{-1}$ identifies the
set $\Psi(\Theta)$ of (\ref{exp_Psi}) to a set of
the required form 
\begin{equation*}
\begin{split}
\Theta &\stackrel{j}{\simeq} \{ (x,y,t) \in U(F)_{\CC}\times \CC^{l}\times F  ,  \, \Ree (x)\in U', y \in Y', t \in F' \;
\vert \; \Imm(x)+l_{t}(y,y)\in \sigma+a\} \\
& \subset U(F)_{\CC}\times \CC^{l}\times F \;\;.
\end{split}
\end{equation*}
\end{proof}

\section{Definability of the uniformisation map: proof of Theorem~\ref{fset}.}\label{PS}

First notice that, although the variety $S$ does not
canonically embed into some $\RR^n$, the statement of
Theorem~\ref{fset} makes sense as $S$ has a canonical structure of
real algebraic manifold, hence of $\RR_{\an, \exp}$-manifold: cf. appendix~\ref{defi}.

By Theorem~\ref{siegel} there exist a semi-algebraic Siegel set 
$\Sigma$ and a finite subset $J$ of $\G(\QQ)$ such that
$\cF:= J \cdot\Sigma $ is a (semi-algebraic) fundamental set 
for the action of $\Gamma$ on $\cD$. Hence Theorem~\ref{fset} follows from the following more
precise result.

\begin{theor}\label{PetStar}
The restriction $\pi_{|\Sigma} : \Sigma \lto S$ of the uniformising
map $\pi \colon \cD \lto S$ is definable in $\RR_{\an, \exp}$.
\end{theor}

\begin{proof}

By the Proposition~\ref{OSiegel} we know that $\Sigma$ is covered by a finite union of open subsets $\Theta$ with the 
following properties. For each $\Theta$ there is a rational boundary component
$F$, a simplicial cone $\sigma\in \boldsymbol{\sigma}$ with  $\sigma\subset \overline{C(F)}$, a point 
$a\in C(F)$,  relatively compact subsets $U'$, $Y'$ and $F'$ of
$U(F)$, $\CC^{l}$ and $F$ respectively such that the set $\Theta$ is of the form
\begin{equation} \label{descr}
\begin{split}
\Theta & \stackrel{j}{\simeq} \{ (x,y,t) \in U(F)_{\CC}\times \CC^{l}\times F,   \, \Ree (x)\in U', y \in Y', t \in F' \;
\vert \;  \Imm(x)+l_{t}(y,y)\in \sigma+a\} \\
& \subset U(F)_{\CC}\times \CC^{l}\times F \;\;.
\end{split}
\end{equation}

We first prove that the holomorphic coordinates we introduced on
$\cD_F$ are definable:
\begin{lem} \label{def_j}
The canonical isomorphism $j: \cD_F \simeq  U(F)_{\CC}\times
\CC^{l}\times F$ is semi-algebraic.
\end{lem}

\begin{proof}
The isomorphism~$j$ was studied in \cite{PS} and in full generality in
\cite[\S 7]{WK} (cf. \cite[\S 1.6]{BB} for a survey). To keep the
amount of definitions at a reasonable level we follow in this proof
(and this proof only) the notations of Wolf and Koranyi in \cite{WK}. For example our $X$, resp. $X^\vee$ is denoted by $M$, resp. $M^*$.

Let $\xi \colon \Fp^- = \CC^N \lto M^*$ be
the Harish-Chandra morphism defined by $\xi(E) = \exp(E)\cdot x$
(cf. \cite[p.901]{WK}; in the notations of Wolf and Koranyi $x$ is the
base point of $M^*$). This is a holomorphic embedding onto a dense
open subset of $M^*$. Notice that the map $\xi$ is real algebraic: indeed
$\Fp^-$ is a nilpotent sub-algebra of $\Fg^\CC$ hence the
exponential is polynomial in restriction to $\Fp^-$. The bounded
symmetric domain $\cD$ is $\xi^{-1}(G^0(x))$. 

Let $\Delta$ be a maximal set of strongly orthogonal positive non-compact roots
of $\Fg^\CC$ as in \cite[p.901]{WK}. For any $\alpha \in \Delta$ let
$c_\alpha \in G$ be the partial Cayley transform of $M$ associated to $\alpha$
(cf. \cite[p.902]{WK}, recall that with the notations of Wolf and
Koranyi $G$ is the compact form of the complexified group $\G^\CC$!). For a subset $\theta \subset \Delta$ we denote
by $c_\theta := \prod_{\alpha \in \theta} c_\alpha$ the partial
Cayley transform associated with $\theta$ (cf. \cite[\S 4.1]{WK}). 

Following \cite[theor. 4.8]{WK} there exists a unique subset $\theta
\subset \Delta$ such that $F=  \xi^{-1} c_{\Delta -
  \theta} M_\theta$, where $M_\theta =
G_\theta^0(x)$ is defined in \cite[p.912]{WK}. Let $\Fp_\theta^{-1}
\subset \Fp^-$ be defined as in \cite[p.912]{WK}, let $\Fp_{\Delta -
  \theta,1}^-$ be the $(+1)$-eigenspace of $\ad (c_{\Delta -
  \theta}^4)$ on $\Fp_{\Delta-\theta}^-$ and $\Fp_{2}^{\theta, -}$ be the $(-1)$-eigenspace of $\ad (c_{\Delta -
  \theta}^4)$ on $\Fp^-$. One has a canonical decomposition (cf. \cite[p.933]{WK} ):
\begin{equation} \label{decomp}
 \Fp^- = \Fp_{\Delta - \theta,1}^- \oplus \Fp_{2}^{\theta, -} \oplus
\Fp_\theta^- \;\;.
\end{equation}

The decomposition~(\ref{stabilizer}) of the normalizer $N(F) =B^\theta$
(cf. \cite[remark 3 p.932]{WK}) is proven in \cite[theorem
6.8]{WK}. In particular it follows that $\exp_{\Delta-\theta}:= \exp \circ \,\ad \, c_{\Delta-\theta}:
\Fp_{\Delta - \theta,1}^- \lo U(F)_\CC$ and $\exp :\Fp_{2}^{\theta, -}
\lo \CC^l$ are polynomial isomorphisms, while $F \subset \Fp^-$ is a
bounded symmetric domain of $\Fp_\theta^-$.

Following \cite[\S 7.6 and \S 7.7]{WK} the map $j: \cD \lto
U(F)_\CC \times \CC^l \times F \subset U(F)_\CC \times \CC^l
\times \Fp_\theta^- $
is the composition of the semi-algebraic holomorphic maps
$$
\xymatrix{
 \cD \ar@{->}[rr]^>>>>>>>>>{\xi^{-1} c_{\Delta - \theta} \xi} & &\Fp^- = \Fp_{\Delta - \theta,1}^- \oplus \Fp_{2}^{\theta, -} \oplus
\Fp_\theta^- \ar@{->}[rr]^>>>>>>>>{(\exp_{\Delta -\theta}, \exp, \Id)} & & U(F)_\CC \times \CC^l
\times \Fp_\theta^-
}
$$
which finishes the proof of Lemma~\ref{def_j}.
\end{proof}

The previous lemma enables us to forget about the definable biholomorphism
$j$. From now on and for simplicity of notations we simply write $\cD_F = U(F)_{\CC}\times \CC^{l}\times F$.

In the description~(\ref{descr}) we may and do assume that $U'$, $Y'$
and $F'$ are semi-algebraic subsets respectively of  $U(F)_\CC$,
$\CC^l$ and $F$. Then the set $\Theta$ is definable in $\RR_{\an}$ because:
\begin{itemize}
\item[-] the function 
$\psi:Y'\times F'\rightarrow U(F)$ defined by $\psi(y,t)=l_{t}(y,y)$ is analytic and defined
on a compact semi-algebraic set.
\item[-] the cone $\sigma$ is polyhedral, hence semi-algebraic.
\end{itemize}

Hence the restriction $\pi_{|\Sigma}: \Sigma \lo S$ is definable in $\RR_{\an,\exp}$ if and only if the
restriction $\pi_{|\Theta}: \Theta \lo S$ to any set $\Theta$ appearing in the
proposition \ref{OSiegel} is definable in $\RR_{\an,\exp}$. 

Fix such a set 
$$
 \Theta= \{ (x,y,t),  y \in Y', t \in F', \Ree(x)\in U'\vert \Imm(x)+l_{t}(y,y)\in \sigma+a\}
 $$
associated to a rational boundary component $F\in \{F_{1},\dots,
F_{r}\}$. 

Consider the left-hand side of the diagram~(\ref{factor}):
\begin{equation*} 
\xymatrix{
\cD \ar[d]^{\exp_F}  \ar@/_4pc/[dd]_{\pi}\\
\exp_F(\cD) \ar[d]^{\pi_F}\\
S  
}
\end{equation*}

Recall that 
$
\exp_{F}: \cD_{F}\rightarrow \CC^{*k}\times \CC^l \times F
$
is given by 
\begin{equation*} 
(x,y,t)\mapsto (\exp(2i \pi x ,y,t), \; \textnormal{where} \; \exp(2i \pi
  x)=(\exp(2i \pi x_{1}),\dots,\exp(2i \pi x_{k}))\;\;.
\end{equation*}
The function $\Ree(x_i)$, $1 \leq i \leq k$, is bounded on $\Theta$
hence the restriction to $\Theta$ of the map $x\mapsto \exp(2 i \pi
\Ree(x))$ is definable in $\RR_\an$. On the other hand the restriction
to $\Theta$ of the function $x\mapsto \exp(-2
\pi \Imm(x))$ is definable in $\RR_{\exp}$ by definition of $\RR_{\exp}$. Thus the
restriction to $\Theta$ of the map $\exp_{F}$ is definable in
$\RR_{\an,\exp}$ and  we are reduced to showing that $\pi_F : \exp_F(\Theta)
\lo S$ is definable in $\RR_{\an, \exp}$.

Consider the lower part of the diagram~(\ref{factor}):
\begin{equation*}
\xymatrix{
\exp_F(\cD) \ar[d]^{\pi_F}
\ar@{^(->}[r] & \exp_F(\cD)^\vee \ar[d]^{\ol{\pi}_F} \\
S  \ar@{^(->}[r] & \ol{S}\;\;.
}
\end{equation*}
As $U', V', F'$ are relatively compact and the imaginary part of $x$
has a lower bound on $\Theta$, the closure
$\overline{\exp_F(\Theta)}$ of $\exp_F(\Theta)$ is compact in $\exp_F(\cD)^\vee$. Hence $\pi_F : \exp_F(\Theta)
\lo S$, which is the restriction of the analytic
map $\overline{\pi}_F: \exp_F(\cD)^\vee \lo \overline{S}$ to the
relatively compact subset $\exp_F(\Theta)$ of $\exp_F(\cD)^\vee$, is
definable in $\RR_{\an}$.

\end{proof}

\section{Proof of Theorem~\ref{gro}} \label{growth}

\subsection{Distance, norm, height} \label{distance, norm, height}

\subsubsection{Distance}
Let $*$ be the adjunction on $E_\RR$ associated to the Hilbert
structure $\|\cdot \|_\infty$ on $E_\RR$. The restriction of the
bilinear form $(u,v) \mapsto \tr(u^* v)$ to the Lie
algebra $\Lie(\G(\RR))$ defines a $\G(\RR)$-invariant K\"ahler metric $g_X$ on
$X$. We denote by $d: X \times X \lo \RR$ the associated distance
and by $\omega$ the associated K\"ahler form.

\subsubsection{Norm} \label{norm}
We still denote by $\| \cdot \|_\infty: \End E_\RR \lo \RR$ the
operator norm associated to the norm $\|\cdot\|_\infty$ on $E_\RR$.
By restriction we also denote by $\|\cdot \|_\infty: \G(\RR) \lo \RR$ the
function $\| \cdot \|_\infty \circ \rho$.
As $K_{\infty}$ preserves the norm $\| \cdot \|_\infty$ on $E_\RR$, the function
$\|\cdot \|_\infty: \G(\RR) \lo \RR$ is $K_{\infty}$-bi-invariant, in particular
descends to a $K_{\infty}$-invariant function
$ \| \cdot \|_\infty : X  \lo \RR$.

Choose $(e_1, \dots, e_n)$ a basis of $E_\ZZ$ in which $\AA$
diagonalizes. It will be useful to compare the norm $\|\cdot\|_\infty$
with the norm $|\cdot|_\infty: \End E_\RR
\lo \RR$ defined by 
\begin{equation} \label{equanorm}
\forall \; \varphi \in \End E_\RR, \quad |\varphi|_\infty = \max_{i,j}
|\varphi_{ij}| \;\;,
\end{equation}
where $(\varphi_{ij})$ is the matrix of $\varphi$ in the basis $(e_1,
\dots, e_n)$ of $E_\RR$.

\subsubsection{Height}
\begin{defi} \label{height}
We define the (multiplicative) height function $H: \End
E_\ZZ \lo \RR$ as
$$ \forall  \,\varphi \in \End E_\ZZ, \;\; H(\varphi) = \max(1, \|\varphi\|_\infty)\;\;.$$
\end{defi}

\begin{rem}
When $\dim_\QQ E=1$, this height function coincides with the
classical multiplicative height function on rational numbers.
\end{rem}

By restriction, we also denote by $H: \G(\ZZ) \lo \RR$ the
function $H \circ \rho$.
Notice that for $\varphi \in \End E_\RR$, $\| \varphi \|_{\infty}$ is the square root of the largest
eigenvalue of the positive definite matrix $\varphi^*
\varphi$. If $\varphi \in \End E_\ZZ$ it follows that
$\|\varphi\|_\infty$ is at least $1$, hence
$$ \forall \, \varphi \in \G(\ZZ), \;\; H(\varphi) =
\|\varphi\|_\infty \geq 1\;\;.$$

We also define $H_\class$ the classical multiplicative height on $\End E$ using the
basis $(e_i^* \otimes e_j)_{i, j}$. In particular if $\varphi \in \End
E_\ZZ$ then $H_\class (\varphi) = |\varphi|_\infty$. As the
norms $\|\cdot \|_\infty$ and $|\cdot |_\infty$ are equivalent on
$\End E_\RR$ we obtain the following:

\begin{lem} \label{comp}
There exist a positive number $C$
such that
$$ \forall \varphi \in \End E_\ZZ, \;\; \frac{1}{C} \cdot H_\class(\varphi) \leq
H(\varphi) \leq C \cdot H_\class(\varphi) \;\;.$$
\end{lem}

\subsection{Comparing norm and distance}
\begin{lem} \label{compar}
For any $g \in \G(\RR)$ the following inequality holds:
$$ \log \|g\|_\infty \leq d(g\cdot x_0, x_0) \;\;.$$
\end{lem}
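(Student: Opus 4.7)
The plan is to use the Cartan $KAK$ decomposition of $\G(\RR)$ to reduce to the case $g = \exp(H)$ with $H$ in the non-compact part $\Fp$ of the Cartan decomposition, and then to chain two elementary inequalities: the submultiplicativity of the operator norm under $\exp$, and the standard domination of the operator norm by the Hilbert--Schmidt norm on $\End E_\RR$.

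First I would write $g = k_1 \exp(H) k_2$ with $k_1, k_2 \in K_\infty$ and $H \in \Fp$, where $\Lie(\G(\RR)) = \mathfrak{k} \oplus \Fp$ is the Cartan decomposition attached to $K_\infty$. Since the metric $d$ is $\G(\RR)$-invariant and $K_\infty$ fixes $x_0$,
$$ d(g \cdot x_0, x_0) = d(k_1 \exp(H) k_2 \cdot x_0, x_0) = d(\exp(H) \cdot x_0, x_0). $$
Since $\|\cdot\|_\infty$ on $E_\RR$ is $\rho(K_\infty)$-invariant, the induced operator norm on $\End E_\RR$ is bi-invariant under $\rho(K_\infty)$, so $\|g\|_\infty = \|\exp(H)\|_\infty$. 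This reduces the inequality to the case $g = \exp(H)$ with $H \in \Fp$.

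For such $H$, the curve $t \mapsto \exp(tH) \cdot x_0$ is the geodesic in $X$ emanating from $x_0$ with initial velocity $H$ under the identification $T_{x_0} X \simeq \Fp$; as $X$ is of noncompact type (and hence Cartan--Hadamard) this geodesic is globally minimising, so
$$ d(\exp(H) \cdot x_0, x_0) = \sqrt{g_X(H,H)} = \sqrt{\tr(d\rho(H)^* d\rho(H))} = \|d\rho(H)\|_{HS}, $$
where $\|\cdot\|_{HS}$ denotes the Hilbert--Schmidt norm on $\End E_\RR$ induced by $\|\cdot\|_\infty$. On the other hand, by submultiplicativity of the operator norm,
$$ \|\exp(H)\|_\infty = \|\exp(d\rho(H))\|_\infty \leq \sum_{n \geq 0} \frac{\|d\rho(H)\|_\infty^n}{n!} = e^{\|d\rho(H)\|_\infty}, $$
and combining this with the standard bound $\|A\|_\infty \leq \|A\|_{HS}$ for any $A \in \End E_\RR$ (the largest singular value is dominated by the $\ell^2$-norm of all singular values) yields
$$ \log \|g\|_\infty \;\leq\; \|d\rho(H)\|_\infty \;\leq\; \|d\rho(H)\|_{HS} \;=\; d(g \cdot x_0, x_0). $$

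I do not anticipate any substantive obstacle: the two reductions in the first step are immediate consequences of the invariances built into the setup (the $\G(\RR)$-invariance of $g_X$ and the $\rho(K_\infty)$-invariance of $\|\cdot\|_\infty$), and the remaining steps amount to standard facts about matrix norms together with the classical description of the geodesics through the origin of a Riemannian symmetric space of noncompact type.
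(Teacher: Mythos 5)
Your proof is correct and takes essentially the same route as the paper: reduce via a Cartan decomposition using the $K_\infty$-bi-invariance of $\|\cdot\|_\infty$ and the $\G(\RR)$-invariance of $d$, then identify $d(\exp(H)\cdot x_0,x_0)$ with the Hilbert--Schmidt norm of $d\rho(H)$ (the geodesic through $x_0$) and dominate $\log\|g\|_\infty$ by it. The only difference is cosmetic: the paper writes $g=k_1ak_2$ with $a$ in a maximal split torus, diagonalises $a$ in an orthonormal basis and compares $\max_i\log|a_i|$ with $\sqrt{\sum_i(\log|a_i|)^2}$, which is just the coordinate form of your chain $\log\|\exp(A)\|_\infty\le\|A\|_\infty\le\|A\|_{HS}$.
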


\begin{proof}
Let $\G(\RR) = K_\infty \cdot A_\infty \cdot K_\infty$ be a Cartan
decomposition of $\G(\RR)$ associated to $K_{\infty}$, where
$A_\infty$ is a maximal split real torus of $G$ containing $A$. Let $g \in \G(\RR)$ and write $g= k_1
\cdot a\cdot k_2$ its Cartan decomposition, with
$k_1$, $k_2$ in $K_{\infty}$ and $a \in A_\infty$. As $\|\cdot
\|_\infty$ is $K_\infty$-bi-invariant and $d$ is
$\G(\RR)$-equivariant the equalities $\log \|g\|_\infty
= \log \|a\|_\infty$ and $d(g
\cdot x_0, x_0) = d(a \cdot x_0, x_0)$ do hold. 

The torus $A_\infty$ is diagonalisable in an orthonormal basis $(f_1, \dots, f_n)$
of $E_\RR$. Write $a= \diag(a_1, \dots, a_n)$ in this basis, then:
$$ \log \|a\|_\infty = \max_{i} \log |a_i| \quad \textnormal{and}
\quad  d(a \cdot x_0, x_0) = \sqrt {\sum_{i= 1}^{n} (\log
  |a_i|)^2}$$
hence the result.
\end{proof}

\subsection{Comparing height and norms}
The main result of this section is the following:
\begin{lem} \label{compar_norm_height}
Let $\cF \subset X$
be the fundamental domain described in the Theorem~\ref{siegel}.
There exists a positive number $B$ such that:
\begin{equation} \label{comp1}
\forall \;\gamma \in
\G(\ZZ), \quad \forall \;u \in \gamma \cF, \qquad H(\gamma) \leq B
\cdot \| u \|_{\infty}^n\;\;.
\end{equation}
\end{lem}

\begin{proof}
Write $u = \gamma \cdot j \cdot x$ with $j\in J$ and $x= \omega \cdot
a \cdot k \in \Sigma'_{t_{0}, \Omega} = \Omega \cdot A_{t_{0}}\cdot
K_\infty$. Thus:
\begin{equation} \label{equa1}
u = j \cdot (j^{-1} \gamma j) \cdot a \cdot (a^{-1} \omega a) \cdot k \;\;.
\end{equation}

Notice that for each $j \in \G(\QQ)$ the groups $\G(\ZZ)$ and $j^{-1}
\G(\ZZ) j$ are commensurable (i.e. their intersection is of finite
index in both of them). As the subset $J\subset \G(\QQ)$ is finite, it
follows that the subgroup $\G(\ZZ) _J:= \G(\ZZ) \bigcap (\bigcap_{j \in
  J} j^{-1} \G(\ZZ) j)$ is of finite index in $j^{-1} \G(\ZZ) j$, $j \in J$. Choose a finite set $S$ of
representatives in $\G(\QQ)$ for the cosets $j^{-1} \G(\ZZ) j /
\G(\ZZ)_J$, $j \in \{1\} \cup J$. Hence there exists a unique $s \in S$ and $\gamma' \in
\G(\ZZ)_J \subset \G(\ZZ)$ such that $j^{-1} \gamma j = s\cdot
\gamma'$. We deduce from~(\ref{equa1}):
\begin{equation} \label{equa1'}
u = j s\cdot (\gamma '\cdot a) \cdot (a^{-1} \omega a) \cdot k \;\;.
\end{equation}

The set $J\cdot S$ is finite. The group $K_\infty$ is compact. Moreover
the set $\bigcup_{a \in A_{t_{0}}} a^{-1} \Omega
a$ is relatively compact in $G$ by \cite[Lemma 12.1]{bor}. As $\|\cdot
\|_\infty$ is sub-multiplicative, it follows 
from~(\ref{equa1'}) that there exists a positive number
$b$, depending only on $\Omega$ and $t_{0}$, such that 
\begin{equation} \label{equa2}
\|u \|_{\infty} \geq b \,\| \gamma'  \cdot a \|_\infty \;\;.
\end{equation}

As $j^{-1} \gamma j = s\cdot
\gamma'$ and $J$ and $S$ are finite sets, there exists a positive
number $b'$, depending only on $\Omega$ and $t_{0}$, such that 
\begin{equation} \label{equa2'}
\| \gamma' \|_{\infty} \geq b' \,\|\gamma \|_\infty \;\;.
\end{equation} 
Thus Lemma~\ref{compar_norm_height} follows the equality $H(\gamma) =
\| \gamma\|_{\infty}$, 
inequalities~(\ref{equa2}) and (\ref{equa2'}) and the
Sublemma~\ref{sous-lemme-majoration} below.
\end{proof}

\begin{sublem} \label{sous-lemme-majoration}
There exists a positive number $B$ depending only on
$\Omega$ and $t_0$ such that for all $\gamma \in \G(\ZZ)$ and $a \in A_{t_{0}}$ the following
inequality holds:
\begin{equation} \label{equa3}
\|\gamma\|_\infty \leq B \cdot \|\gamma \cdot a
\|_\infty^n\;\;.
\end{equation}
\end{sublem}

\begin{proof}
As the norm $\|\cdot \|_{\infty}$ on $\End E_\RR$ is
equivalent to the norm $|\cdot |_\infty$, it is enough to show
that 
$|\gamma|_\infty \leq |\gamma \cdot a
|_\infty^n$.

Let $\gamma = (\gamma_{k, l})$ be the matrix
  of $\gamma$ in the basis $(e_1, \dots , e_n)$ of $E_\ZZ$. As the torus $\AA$ is diagonalisable
  in the basis $(e_1, \dots , e_n)$, we write $a = \diag (a_1, \dots,
  a_n)$, with $a_i \in \RR^{>0}$. It follows that:
\begin{equation} \label{equa4}
\forall \; k, l \in \{1, \dots, n \}, \quad (\gamma \cdot a)_{k l} =  \gamma_{kl} \cdot
  a_l\;\;.
\end{equation}

As $\gamma$ is invertible, there
exists for each $s \in \{1, \dots, n\}$ an index $r_s \in \{1, \dots, n\}$ such that $\gamma_{r_s, s} \not =
0$. It follows from equation~(\ref{equa4}) that:
\begin{equation} \label{equa5}
\forall \; k, l \in \{1, \dots, n \}, \quad (\gamma \cdot a)_{k, l}
\cdot \prod_{s\not =
  l} (\gamma \cdot a)_{r_{s}, s} = \gamma_{k,l} \cdot \prod_{s\not =
  l} \gamma_{r_{s}, s} \cdot \prod_{s=1}^n a_s = \gamma_{k,l} \cdot \prod_{s\not =
  l} \gamma_{r_{s}, s} \;\;,
\end{equation}
where we used that $\prod_{l=1}^n a_i =1$ as $\rho(\G) \subset
  \SL(E)$.

Notice that $\Gamma = \G(\ZZ)$ hence each $\gamma_{k,l}$ is an
integer. It follows from the equation~(\ref{equa5}) that:
\begin{equation*} \label{equa6}
\forall \; k, l \in \{1, \dots, n \}, \quad  | \gamma_{k,l}| \leq  |
  \gamma_{k,l} \cdot \prod_{s\not =
  l} \gamma_{r_{s}, s} | = | (\gamma \cdot a)_{k, l} \cdot \prod_{s\not =
  l} (\gamma \cdot a)_{r_{s}, s}  | \leq (\max_{r,s} |(\gamma \cdot
a)_{r,s}|)^n.
\end{equation*}
In other words:
$
| \gamma|_\infty \leq | \gamma \cdot a |_\infty^n
$.
Hence the inequality~(\ref{equa3}) follows.

\end{proof}

\subsection{Lower bound for the volume of an algebraic curve.}
In \cite[Corollary 3 p.1227]{ht}, Hwang and To prove the following
lower bound for the area of any complex analytic curve in $\cD$~:
\begin{theor}[Hwang and To] \label{lowerbound}
Let $C$ be a complex analytic curve in $\cD$.
For any point $x_0 \in C$ there exist positive constants
$a_1, b_1$ such that for any
positive real number $R$ one has~:
\begin{equation} \Vol_C (C \cap B(x_0, R)) \geq a_1 \exp (b_1\cdot R)
  \;\;.\end{equation} 
\end{theor}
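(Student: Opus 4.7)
The plan is to exploit the strictly negative holomorphic sectional curvature of $\cD$. Because $\cD$ is a Hermitian symmetric domain of noncompact type, the invariant K\"ahler metric $g_X$ has holomorphic sectional curvature bounded above by some constant $-\kappa<0$. By the Gauss equation for K\"ahler submanifolds, at any smooth point of a complex analytic curve $C\subset \cD$ the Gaussian curvature of the induced metric equals the holomorphic sectional curvature of $g_X$ along $T_xC$ minus a nonnegative ``second fundamental form'' contribution. In particular, the Gaussian curvature of $C$, with the induced metric, is bounded above by $-\kappa$ on the smooth locus of $C$.

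Next, I would reduce to the smooth case by pulling the induced metric back to the normalisation $\widetilde{C}\to C$ (a finite birational morphism, so volumes are preserved) and passing to its universal cover, which is Cartan--Hadamard and on which the curvature bound persists. One then invokes the elementary fact that intrinsic distances in $C$ dominate the restriction of the ambient distance, i.e.\ $d_C(x_0,x)\geq d_{\cD}(x_0,x)$ for every $x\in C$ (any arc in $C$ is an arc in $\cD$). Consequently $C\cap B(x_0,R)$ contains the intrinsic geodesic ball $B_C(x_0,R)\subset C$, and it suffices to bound $\Vol_C(B_C(x_0,R))$ from below.

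Finally, apply the Bishop--G\"unther volume comparison: in a complete simply connected Riemannian surface whose Gaussian curvature is everywhere $\leq -\kappa$, the area of a geodesic disc of radius $R$ is bounded below by the corresponding area in the model space of curvature $-\kappa$, namely $\frac{2\pi}{\kappa}\bigl(\cosh(\sqrt{\kappa}\,R)-1\bigr)$. For $R$ sufficiently large this yields $\Vol_C(B_C(x_0,R))\geq a_1 e^{c_1 R}$ with $c_1=\sqrt{\kappa}$ and an appropriate $a_1>0$.

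The main technical obstacle is the delicate handling of the singularities of $C$ and of the non-simple-connectivity of its smooth locus. A cleaner and more intrinsic way, and the one actually adopted by Hwang and To, is to express the left-hand side in terms of Lelong-type integrals of $\omega$ against the integration current $[C]$; the comparison argument above then becomes a $(1,1)$-current computation applied to a plurisubharmonic exhaustion of $\cD$ whose growth is controlled from below by $d(x_0,\cdot)$. Either way, the exponential lower bound is essentially a manifestation of the volume of hyperbolic balls.
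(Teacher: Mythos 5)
The paper itself does not prove this statement: Theorem~\ref{lowerbound} is quoted directly from Hwang--To \cite[Cor.~3, p.~1227]{ht}, so there is no internal proof to compare against, and your sketch must stand on its own.

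Your elementary curvature-comparison route has a genuine gap in the reduction to the simply connected case. After passing to the universal cover $p\colon \widetilde{C}\to C$ of (the normalisation of) $C$ and applying G\"unther's comparison there, you obtain a lower bound on $\Vol\bigl(B_{\widetilde{C}}(\tilde{x}_0,R)\bigr)$. But for a Riemannian covering the relevant inequality runs the \emph{wrong} way: $p$ maps $B_{\widetilde{C}}(\tilde{x}_0,R)$ onto $B_{C}(x_0,R)$ with multiplicity at least one, so
\[ \Vol\bigl(B_{\widetilde{C}}(\tilde{x}_0,R)\bigr)\;\geq\;\Vol\bigl(B_{C}(x_0,R)\bigr), \]
and a lower bound on the left-hand side says nothing about the right-hand side, which is what you need. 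Applying G\"unther directly on $C$ is valid only up to the injectivity radius, and controlling that for an arbitrary (possibly singular, possibly topologically complicated) closed analytic curve in $\cD$ is precisely the nontrivial content of the Hwang--To theorem: the multiplicity of $p$ on $R$-balls can itself grow rapidly with $R$, so the discrepancy between upstairs and downstairs volumes may swallow the whole exponential bound. Your closing paragraph correctly identifies the mechanism Hwang and To use to sidestep this --- integrating $\omega$ against the current $[C]$ and applying a Jensen/Lelong-type formula with respect to a plurisubharmonic exhaustion of $\cD$ comparable to $d(x_0,\cdot)$, an argument insensitive to the topology and singularities of $C$ --- but as written it is a pointer, not a proof, and the curvature-comparison argument preceding it does not close. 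Secondary points you should also record: completeness of the induced metric on $C$ (needed both for geodesic balls to behave and for any comparison theorem) follows from completeness of $g_X$ and closedness of $C$ in $\cD$ but should be stated; and the metric pulled back to the normalisation degenerates over singular points of $C$, which is harmless for areas but requires care for geodesics.
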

Here $\Vol_C$ denotes the area for the Riemanian metric on $C$
restriction of the metric $g_X$ on $\cD$ and $B(x_0, R)$ denotes the
geodesic ball of $\cD$ with center $x_0$ and radius $R$.

\subsection{Upper bound for the volume of algebraic curves on Siegel sets}
\begin{lem} \label{curveSiegel}
\begin{itemize}
\item[(i)]
There exists a constant $A_0>0$ such that for any algebraic curve
$C\subset \cD$ of degree $d$ we have the bound
$$ \Vol_C(C \cap \Sigma) \leq A_0 \cdot d \;\;.$$
\item[(ii)] There exists a constant $A>0$ such that for any algebraic curve
$C\subset \cD$ of degree $d$ we have the bound
$$ \Vol_C(C \cap \cF) \leq A \cdot d \;\;.$$
\end{itemize}
\end{lem}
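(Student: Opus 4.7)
The bound (ii) reduces to (i): since $\cF = J \cdot \Sigma$ with $J \subset \G(\QQ)$ finite and $\omega$ is $G$-invariant, one has
$$\Vol_C(C \cap \cF) \;\leq\; \sum_{j \in J} \Vol_{j^{-1}C}(j^{-1}C \cap \Sigma),$$
and each $j^{-1} C$ is an algebraic subvariety of $\cX$ of degree at most $c_j \cdot d$ for some $c_j$ depending only on $j$ (as $\G(\QQ)$ acts algebraically on the compact dual $X^\vee$). So (ii) follows from (i) with $A = (\max_j c_j)\,|J|\,A_0$.

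For (i), apply Proposition~\ref{OSiegel} to cover $\Sigma$ by finitely many pieces $\Theta$, each of the form
$$\Theta \simeq \{(x,y,t) \in U(F)_\CC \times \CC^l \times F : \Ree x \in U',\ y \in Y',\ t \in F',\ \Imm x + l_t(y,y) \in \sigma + a\}$$
in PS coordinates, with $U', Y', F'$ relatively compact. It then suffices to bound $\Vol_C(C \cap \Theta) \leq A_\Theta \cdot d$ for each such $\Theta$.

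The core step is to bound the invariant K\"ahler form $\omega$ on $\Theta$ by a sum of ``tame'' semi-algebraic forms that integrate well against algebraic curves. Using the explicit Wolf--Koranyi formula for the Bergman kernel on a type III Siegel domain and the fact that on $\Theta$ the argument $\Imm x + l_t(y,y)$ of the Bergman potential lies in the shifted cone $\sigma + a$ (hence stays bounded away from $\partial C(F)$), one verifies a pointwise estimate of the shape
$$\omega_{|\Theta} \;\leq\; A \cdot \bigl(\pi_x^* \omega_{\mathrm{tube}} + \pi_{y,t}^* \omega_{\mathrm{bdd}}\bigr),$$
where $\pi_x : \cD_F \to U(F)_\CC$ and $\pi_{y,t} : \cD_F \to \CC^l \times F$ are the coordinate projections (algebraic with respect to the ambient $X^\vee$), $\omega_{\mathrm{tube}}$ is the invariant K\"ahler form on the tube domain $U(F) + iC(F)$, and $\omega_{\mathrm{bdd}}$ is a smooth bounded positive $(1,1)$-form on the relatively compact set $\overline{Y' \times F'}$.

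To conclude, integrate on $C \cap \Theta$: for any algebraic projection $\pi$ and positive form $\eta$, the pushforward inequality gives $\int_{C \cap \Theta} \pi^* \eta \leq \deg(\pi_{|C}) \int_{\pi(\Theta)} \eta \leq d \int_{\pi(\Theta)} \eta$, since a coordinate projection has generic fibre size on $C$ bounded by $d$ (Bezout). The $\omega_{\mathrm{bdd}}$-contribution is then $\leq A_1 \cdot d$, because $\pi_{y,t}(\Theta) \subset \overline{Y' \times F'}$ is compact and $\omega_{\mathrm{bdd}}$ is bounded. The $\omega_{\mathrm{tube}}$-contribution is $\leq d \cdot \int_{\pi_x(\Theta)} \omega_{\mathrm{tube}}$, and $\pi_x(\Theta)$ is contained in a classical Siegel set of the tube domain $U(F) + iC(F)$, whose invariant volume is finite by the classical theory. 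Summing over the finite cover yields (i). The main obstacle is the pointwise comparison in the third paragraph: although natural, verifying it requires explicit control of the mixed derivatives of $\log\det(\Imm x + l_t(y,y))$ in PS coordinates and a careful check that the ``horizontal'' contributions of $\omega$ (in the fibration $\cD_F \to F$) are absorbed into the tube/bounded decomposition.
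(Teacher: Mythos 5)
Your reduction of (ii) to (i) is correct and essentially matches the paper (the paper simply notes that the curves $j^{-1}C$ are of degree $d$ rather than $c_j\cdot d$, but either way the constant is harmless). Your proof of (i) also follows the same overall scheme as the paper: cover $\Sigma$ by the pieces $\Theta$ of Proposition~\ref{OSiegel}, compare the invariant K\"ahler form $\omega$ with a sum of tame ``coordinate'' forms, and use that the coordinate projections of $C\cap\Theta$ are at most $d$-to-$1$ to pull the degree out and reduce to bounding a fixed volume on each coordinate factor.

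However, there is a genuine gap, which you honestly flag yourself: the pointwise comparison $\omega_{|\Theta}\leq A(\pi_x^*\omega_{\mathrm{tube}} + \pi_{y,t}^*\omega_{\mathrm{bdd}})$ is stated but not proved, and you admit that verifying it ``requires explicit control of the mixed derivatives of $\log\det(\Imm x+l_t(y,y))$.'' Without this estimate the argument does not close. This is precisely the technical heart of the lemma, and it is not a routine computation --- the cross terms between the $x$, $y$, and $t$ directions in the Hessian of the Bergman potential have to be dominated, and naively the ``bounded'' part of your estimate need not be bounded independently of the $x$-fiber. The paper sidesteps all of this by citing Mumford [MuMu, Theorem 3.1], which asserts exactly the needed comparison $\omega\leq c\cdot\omega_{\cD_F}$ where $\omega_{\cD_F}=\sum dx_i\wedge d\overline{x}_i/\Imm(x_i)^2+\sum dy_j\wedge d\overline{y}_j+\sum df_k\wedge d\overline{f}_k$ is the product of Poincar\'e and Euclidean metrics in the Piatetskii-Shapiro coordinates; this is the key input and you should cite it rather than attempt to rederive it. A secondary, smaller difference: with Mumford's $\omega_{\cD_F}$ each term is a \emph{one-variable} form, so the projection argument reduces to the elementary facts that a bounded strip $\{\,\Ree(x_i)\ \text{bounded},\ \Imm(x_i)\geq c\,\}$ has finite hyperbolic area and that the $y_j,f_k$-projections of $\Theta$ are relatively compact with finite Euclidean area; one never needs to invoke the finite volume of a Siegel set of the tube domain $U(F)+iC(F)$, which is a stronger and less self-contained statement than what you actually need.
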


\begin{proof}
We first prove $(i)$.
Recall that $\Sigma$ is covered by a finite union of open subsets
$\Theta$ described in Proposition~\ref{OSiegel}:  there is a rational boundary component
$F$, a simplicial cone $\sigma\in \Sigma$ with  $\sigma\subset \overline{C(F)}$, a point 
$a\in C(F)$,  relatively compact subsets $U'$, $Y'$ and $F'$ of
$U(F)$, $\CC^{l}$ and $F$ respectively such that the set $\Theta$ is of the form
$$
\Theta= \{ (x,y,t) \in \cD_\F,  y \in Y', t \in F', \Ree (x)\in U'\vert \Imm(x)+l_{t}(y,y)\in \sigma+a\}
\subset \cD_{F}=U(F)_{\CC}\times \CC^{l}\times F \;\;.
$$ 
Recall that $\omega$ denotes the natural K\"ahler form on $X$. As $C
\subset X$ is a complex analytic curve, one has:
$$  \Vol_C(C \cap \Theta) = \int_{C \cap \Theta} \omega \;\;.$$
On the other hand let $\omega_{\cD_{F}}$ be the Poincar\'e metric on
$\cD_F$ defined in the Siegel coordinates by:
$$\omega_{\cD_{F}} = \sum \frac{dx_{i}\wedge d\overline{x}_{i}}{\Imm(x_{i})^{2}}+\sum dy_{j}\wedge d\overline{y}_{j}
+\sum df_{k}\wedge d\overline{f}_{k}.
$$
Mumford \cite[Theor.3.1]{MuMu} proved that there exists a positive constant $c$
such that on $\cD$:
$$
\omega \leq c \cdot \omega_{\cD_{F}}\;\;.
$$
Hence:
$$  \Vol_C(C \cap \Theta) \leq c \int_{C \cap \Theta}
\omega_{\cD_{F}}\;\;.$$

Let $p_{x_{i}}$, $p_{y_{j}}$ and $p_{f_{k}}$ be the projections on $\cD_{F}$
to the coordinates $x_{i}$, $y_{j}$ and $f_{k}$.

As the curve $C$ has degree $d$ the restriction of these 
maps to $C\cap \Theta$ are either constant or at most $d$ to $1$,
hence 
$$  \Vol_C(C \cap \Theta) \leq c\cdot d \cdot (\sum \int_{p_{x_{i}}(\Theta)}  \frac{dx_{i}\wedge
  d\overline{x}_{i}}{\Imm(x_{i})^{2}} + \sum \int_{p_{y_{j}}(\Theta)}  dy_{j}\wedge d\overline{y}_{j}
+\sum \int_{p_{f_{k}}(\Theta)}  df_{k}\wedge d\overline{f}_{k}).$$

Let $i$ be such that the map $p_{x_{i}}$ is not constant.
In view of the description of $\Theta$ the projection
$p_{x_{i}}(\Theta)$ is contained in a usual fundamental set of the
upper-half plane, of finite hyperbolic
area.

Let $w$ be a coordinate $y_{j}$, $f_{k}$ and $p_{w}$ be the associated
projection on the $w$ axis. By the definition of $\Theta$ the
projection $p_w(\Theta)$ is a relatively compact open set of the
plane, hence of finite Euclidean area.

This finishes the proof of $(i)$.

\sspace
Let us prove $(ii)$.
As $C \cap \FF = C \cap J \cdot \Sigma$, one has the inequality:
$$\Vol_C(C \cap \cF) \leq \sum_{j \in J} \Vol_C (C \cap j\cdot \Sigma)
= \sum_{j \in J} \Vol_{j^{-1}C} (j^{-1}C \cap \Sigma) \leq |J| \cdot
A_0 \cdot d$$
where we used part $(i)$ applied to the algebraic curves $j^{-1}C$ of $\cD$,
$j \in J$,
which are of degree $d$.

This finishes the proof of Lemma~\ref{curveSiegel}.
\end{proof}

\subsection{Proof of Theorem~\ref{gro}}
Choose $C\subset Y$ an irreducible algebraic curve. To prove
Theorem~\ref{gro} for $Y$ it is enough to prove it for $C$.

Consider the set
$$ C(T):= \{z \in C \; \textnormal{and}\;\| z\|_\infty \leq T \}\;\;.$$
As $\cF$ is a fundamental domain for the action of $\Ga$ one has on
the one hand:
\begin{equation*}
\begin{split}
C(T) &= \bigcup_{\substack{\gamma \in \Gamma \\ \gamma \cF \cap C
    \not = \emptyset}} \{ u \in \gamma \cF \cap C \;
\textnormal{and}\; \|u \|_\infty \leq T \} \\
&\subset \bigcup_{\substack{\gamma \in \Gamma \\ \gamma \cF \cap C
    \not = \emptyset \\
H(\gamma) \leq B\cdot T^n}} \{ u \in \gamma \cF \cap C\} \quad
\textnormal{by Lemma~\ref{compar_norm_height}} \;\;.
\end{split}
\end{equation*}
Taking volumes: 
\begin{equation*} 
\Vol_C (C(T)) \leq \sum_{\substack{\gamma \in \Gamma \\ \gamma \cF \cap C
    \not = \emptyset \\
H(\gamma) \leq B\cdot T^n}} \Vol_C(\cF \cap \gamma^{-1} C) 
\end{equation*}
hence
\begin{equation} \label{in1}
\Vol_C (C(T)) \leq (A
\cdot d) \cdot N_C(B \cdot T^n) 
\end{equation}
where we applied Lemma~\ref{curveSiegel}(ii) to the algebraic curves
$\gamma^{-1} C$, $\gamma \in \Gamma$, which are all of degree $d$.

On the other hand if follows from Lemma~\ref{compar} that
$$ C \cap B(x_0, \log T) \subset  C(T) \;\;,$$
hence
\begin{equation} \label{in2}
\Vol_C(C \cap B(x_0, \log T)) \leq \Vol_C(C(T)) \;\;.
\end{equation}

Finally:
\begin{equation*}
\begin{split}
(A\cdot d) \cdot N_C (B\cdot T^n) & \geq \Vol_C(C(T)) \;\;\textnormal{ by inequality~(\ref{in1})} \\
& \geq \Vol_C(C
\cap B(x_0, \log T))  \;\;\textnormal{ by inequality~(\ref{in2})}\\
& \geq a_1 \exp (b_1 \log T) \;\;\textnormal{ by Theorem~\ref{lowerbound}}\;\;.
\end{split}
\end{equation*}
Hence the result.

\diam

\section{Stabilisers of a maximal algebraic subset: proof of
  Theorem~\ref{stab}.} \label{stabil}

\subsection{Pila-Wilkie theorem}
 \begin{defi}
The classical height $H_\class(x)$ of a point $x = (x_1, \dots , x_m)\in \QQ^m$ 
is defined as
$$
H_\class(x) = \max (H(x_1),\dots , H(x_m))
$$
where $H$ is the usual multiplicative height of a rational number.
\end{defi}

Let $Z\subset \RR^m$ be a subset and $T\geq 0$ a real number, we define:
$$
\Psi_\class(Z,T):= \{ x \in Z\cap \QQ^m : H_\class(x) \leq T\}
$$
and
$$
N_\class(Z,T):= |\Psi_\class(Z,T)|\;\;.
$$

For $Z\subset \RR^m$ a definable set in a o-minimal structure we
define the algebraic part $Z^\alg$ of $Z$ to be the union of all
positive dimensional semi-algebraic subsets of $Z$.

Recall (cf. definition 3.3 of \cite{UY2}), that a semi-algebraic block
of dimension $w$ in $\RR^m$ is 
a connected definable set $W\subset \RR^m$ of dimension $w$, regular
at every point, such that there exists a semi-algebraic set $A\subset
\RR^m$ of dimension $w$, regular at every point with $W\subset A$.

The following result is a strong form, proven by Pila \cite[theor.3.6]{Pil}, of the original
theorem of Pila and Wilkie \cite{PW}:

\begin{theor}[Pila-Wilkie]\label{PW}
Let $Z\subset \RR^m$ be a definable set in a o-minimal structure. 
For every 
$\epsilon > 0$, there exists a constant $C_{\epsilon} > 0$ such that
$$
N_\class(Z \backslash Z^{\alg}, T) < C_{\epsilon} T^{\epsilon}
$$
and the set $\Psi_\class(Z,T)$ is contained in the union of at most $C_{\epsilon}T^{\epsilon}$ semi-algebraic blocks.
\end{theor}

As a corollary of Theorem~\ref{PW} and Lemma~\ref{comp} one obtains:
\begin{cor} \label{PWadapte}
Let $Z\subset \End E_\RR$ be a definable set in a o-minimal structure. Define
$\Psi(Z,T):= \{ x \in Z\cap \End E_\ZZ : H(x) \leq T\}$
and
$N(Z,T):= |\Psi(Z,T)|$.
For every 
$\epsilon > 0$, there exists a constant $C_{\epsilon} > 0$ such that
$$
N(Z \backslash Z^{\alg},T) < C_{\epsilon} T^{\epsilon}
$$
and the set $\Psi(Z,T)$ is contained in the union of at most $C_{\epsilon}T^{\epsilon}$ semi-algebraic blocks.
\end{cor}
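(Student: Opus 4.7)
The plan is to deduce Corollary~\ref{PWadapte} directly from Theorem~\ref{PW} (the strong Pila--Wilkie theorem for the classical height) combined with the comparison Lemma~\ref{comp}, which furnishes a constant $C>1$ with $H_\class(\varphi) \leq C \cdot H(\varphi)$ and $H(\varphi) \leq C \cdot H_\class(\varphi)$ for every $\varphi \in \End E$.

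First I would translate the counting sets. The inequality $H_\class \leq C \cdot H$ gives immediately the inclusion
$$ \Psi(Z,T) \subseteq \Psi_\class(Z, CT) $$
for every $T \geq 0$, and in particular $\Psi(Z \setminus Z^\alg, T) \subseteq \Psi_\class(Z \setminus Z^\alg, CT)$. Applying Theorem~\ref{PW} to the definable set $Z$ with parameter $CT$ in place of $T$: for every $\epsilon > 0$ there exists $C'_\epsilon > 0$ such that
$$ N_\class(Z \setminus Z^\alg, CT) < C'_\epsilon (CT)^\epsilon $$
and $\Psi_\class(Z, CT)$ is covered by at most $C'_\epsilon (CT)^\epsilon$ semi-algebraic blocks.

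Combining, one gets
$$ N(Z \setminus Z^\alg, T) \leq N_\class(Z \setminus Z^\alg, CT) < C'_\epsilon C^\epsilon \cdot T^\epsilon, $$
and the set $\Psi(Z,T)$, being contained in $\Psi_\class(Z,CT)$, is covered by at most $C'_\epsilon C^\epsilon \cdot T^\epsilon$ semi-algebraic blocks. Setting $C_\epsilon := C'_\epsilon C^\epsilon$ yields both conclusions of the corollary simultaneously.

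There is no real obstacle here: the entire content of the statement is packaged in the comparison Lemma~\ref{comp}, and the factor $C^\epsilon$ introduced by switching heights is harmlessly absorbed into the constant $C_\epsilon$, since $\epsilon$ is arbitrary and $C$ is fixed. The only thing to verify is that the notion of algebraic part $Z^\alg$ is intrinsic to $Z$ (depending only on the ambient semi-algebraic structure, not on the choice of height), which is clear from its definition as the union of positive-dimensional semi-algebraic subsets of $Z$.
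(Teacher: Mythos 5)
Your proof is correct and takes exactly the route the paper intends: the paper states the corollary as an immediate consequence of Lemma~\ref{comp} without writing out the argument, and what you have written — inclusion of counting sets via $H_\class \leq C\cdot H$, application of Theorem~\ref{PW} at threshold $CT$, and absorption of the harmless factor $C^\epsilon$ into $C_\epsilon$ — is precisely that omitted argument. Your remark that $Z^\alg$ is intrinsic (independent of the height used) is a good point to make explicit.
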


\subsection{Proof of Theorem~\ref{stab}}
Let $V$ be an algebraic subvariety of $S$ and $Y$ a maximal
irreducible algebraic
subvariety of $\pi^{-1}V$. Let $\Theta_Y$ be the stabiliser of $Y$ in $\G(\RR)$ and $\bH_Y$
be the neutral component of the Zariski-closure of $\G(\ZZ) \cap
\Theta_Y$ in $\G$. We want to show that $\bH_Y$ is a non-trivial subgroup of
$\G$, acting non-trivially on $X$.

Via $\rho : \G \hookrightarrow \GL(E)$, we view $\G(\RR)$ 
as a semi-algebraic (and hence definable) subset of $\End E_\RR$. As
$\pi_{|\cF}: \cF \lo S$ is definable by Theorem~\ref{fset}, lemmas 5.1
and 5.2 of \cite{UY2} show the following:

\begin{prop}
Let us define
\begin{equation*}
\begin{split} 
\Sigma(Y) &= \{ g\in \G(\RR) : \dim(gY \cap \pi^{-1}V \cap \cF) =
\dim(Y)\} \\
\text{and} \quad \Sigma'(Y) &= \{ g\in \G(\RR) : g^{-1}\cF \cap Y \not= \emptyset  \}.
\end{split}
\end{equation*}
The following properties hold:
\begin{enumerate}
\item The set $\Sigma(Y)$ is definable and for all $g\in \Sigma(Y)$, $gY \subset \pi^{-1}V$.
\item For all $\gamma \in \Sigma(Y)\cap \G(\ZZ)$, $\gamma Y$ is a
  maximal algebraic subset of $\pi^{-1}V$.
\item The following equality holds:
$$
\Sigma(Y)\cap \G(\ZZ)= \Sigma'(Y)\cap \G(\ZZ)\;\;.
$$
\end{enumerate}
\end{prop}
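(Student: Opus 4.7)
The plan is to dispatch the three claims in turn, relying on three ingredients: the definability of $\pi_{|\cF}$ from Theorem~\ref{fset}, the fact that $\pi^{-1}V$ is a closed analytic subset of $\cD$, and the openness of the fundamental set $\cF$.

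For \textbf{(1)}, I would first note that $\pi^{-1}V\cap \cF=(\pi_{|\cF})^{-1}(V)$ is definable in $\RR_{\an,\exp}$, since $\pi_{|\cF}$ is definable by Theorem~\ref{fset} and $V$ is algebraic. The action $\G(\RR)\times \cX\to \cX$ is semi-algebraic, so $\{gY\cap \pi^{-1}V\cap \cF\}_{g\in \G(\RR)}$ is a definable family and, by o-minimality, $g\mapsto \dim(gY\cap \pi^{-1}V\cap \cF)$ is a definable function. Hence $\Sigma(Y)\subset \G(\RR)$ is definable. For the geometric half of (1), fix $g\in \Sigma(Y)$. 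Then $gY\cap \pi^{-1}V$ is a closed analytic subset of $gY$ which contains a subset of the maximal dimension $\dim(gY)$ inside the open set $\cF$. Since $gY$ is an irreducible analytic set (in the sense of the paper's definition of an irreducible algebraic subvariety of $\cD$), the identity principle forces $gY\cap \pi^{-1}V=gY$, i.e.\ $gY\subset \pi^{-1}V$.

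For \textbf{(2)}, take $\gamma\in \Sigma(Y)\cap \G(\ZZ)$. By (1), $\gamma Y\subset \pi^{-1}V$. If $Y'$ is an irreducible algebraic subset of $\cD$ with $\gamma Y\subset Y'\subset \pi^{-1}V$, then $\gamma^{-1}Y'$ is again irreducible algebraic (because $\gamma$ acts algebraically on the realisation $\cX$), contains $Y$, and is contained in $\gamma^{-1}\pi^{-1}V=\pi^{-1}V$ (since $p(\gamma)\in \Gamma$ preserves the fibres of $\pi$). Maximality of $Y$ in $\pi^{-1}V$ gives $\gamma^{-1}Y'=Y$, so $Y'=\gamma Y$, and $\gamma Y$ is maximal.

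For \textbf{(3)}, the inclusion $\Sigma(Y)\cap \G(\ZZ)\subset \Sigma'(Y)\cap \G(\ZZ)$ is immediate: if $\dim(\gamma Y\cap \pi^{-1}V\cap \cF)=\dim Y$, then $\gamma Y\cap \cF\neq \emptyset$, which is the same as $\gamma^{-1}\cF\cap Y\neq \emptyset$. Conversely, let $\gamma\in \G(\ZZ)$ with $\gamma^{-1}\cF\cap Y\neq \emptyset$. Because $\cF$ is open in $\cD$, $\gamma Y\cap \cF$ is a nonempty open subset of the irreducible analytic variety $\gamma Y$, and hence has full dimension $\dim Y$. As $\gamma$ preserves $\pi$-fibres and $Y\subset \pi^{-1}V$, we have $\gamma Y\subset \pi^{-1}V$; thus $\gamma Y\cap \pi^{-1}V\cap \cF=\gamma Y\cap \cF$ is of dimension $\dim Y$ and $\gamma\in \Sigma(Y)$. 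The only step with real content is the identity-principle argument in (1); the remainder is definability bookkeeping together with the openness of $\cF$ and the fact that $\G(\ZZ)$ acts on $\cD$ through $\Gamma$, preserving the fibres of $\pi$.
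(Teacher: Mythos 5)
Your argument is correct and is in substance the proof the paper relies on: the paper gives no independent argument for this proposition, deferring to Lemmas 5.1 and 5.2 of \cite{UY2} once theorem~\ref{fset} supplies definability of $\pi_{|\cF}$, and your three steps (definability of the family together with the o-minimal fibrewise dimension function; the fact that a full-dimensional closed analytic subset of the irreducible analytic set $gY$ must equal $gY$; openness of $\cF$ plus $\Gamma$-invariance of $\pi^{-1}V$ for the reverse inclusion in (3)) are exactly those arguments written out. The only caveat is that part (3) genuinely uses openness of $\cF$, which matches the paper's definition of a fundamental set in the introduction, but the concrete Siegel model $\cF = J\cdot\Sigma_{t_{0},\Omega}$ with $\Omega$ compact is not literally open; one should enlarge it slightly to an open semi-algebraic fundamental set (on which $\pi$ is still definable), a harmless adjustment that the paper itself glosses over.
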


It follows that the number $N_{Y}(T)$ defined in Theorem~\ref{gro}
coincide with $|\Theta(Y,T)|$, where
$$
\Theta(Y,T):=\G(\ZZ) \cap \Psi(\Sigma(Y), T)\;\;.
$$

We can now finish the proof of the theorem \ref{stab} in exactly the same way as 
the proof of theorem 5.4 of \cite{UY2}. For the sake of completeness,
we reproduce it here. As $\Theta(Y, T) \subset \Psi(\Sigma(Y), T)$ it
follows from Corollary~\ref{PWadapte} that
for $T$ large enough, the set $\Theta(Y,T^{\frac{1}{2n}})$
is contained in at most $T^{\frac{c_1}{4n}}$ semi-algebraic blocks.
As $|\Theta(Y,T^{\frac{1}{2n}})| = N_Y(T^{\frac{1}{2n}})\geq T^{\frac{c_1}{2n}}$ by Theorem~\ref{gro}, we see that there is
a semi-algebraic block $W$ in $\Sigma(Y)$ containing at least $T^{\frac{c_1}{4n}}$ elements
$\gamma \in \Sigma(Y)\cap \G(\ZZ)$ such that $H(\gamma) \leq T^{\frac{1}{2n}}$.

Using lemma 5.5 of \cite{UY0} which applies verbatim in our case, we see that there 
exists an element $\sigma$ in $\Sigma(Y)$ such that $\sigma \Theta_Y$ contains at least $T^{\frac{c_1}{4n}}$
elements $\gamma \in \Sigma(Y)\cap \G(\ZZ)$ such that $H(\gamma)\leq T^{\frac{1}{2n}}$.

Let $\gamma_1$ and $\gamma_2$ be two elements of $\sigma \Theta_Y \cap \G(\ZZ)$ such that $H(\gamma)\leq T^{\frac{1}{2n}}$.

Let $\gamma := \gamma_2^{-1}\gamma_1 \in \G(\ZZ) \cap \Theta_Y$.
Using elementary properties of heights, we see that $H(\gamma) \leq c_n T^{1/2}$ where $c_n$ is
a constant depending on $n$ only. It follows that for all $T$ large enough, $\Theta_Y$ contains at least 
$T^{\frac{c_1}{4n}}$ elements $\gamma\in \G(\ZZ)$ with $H(\gamma)\leq T$.
Hence the connected component of the identity $\bH_Y$ 
of the Zariski closure of $\G(\ZZ) \cap \Theta_Y$ in $\G$ is a positive dimensional algebraic
subgroup of $\G$ contained in $\Theta_Y$. This finishes the proof of the theorem \ref{stab}.

\section{End of the proof of Theorem~\ref{AL}.} \label{final}

Let $V$ be an algebraic subvariety of $S$.
Our aim is to show that maximal irreducible algebraic subvarieties $Y$ of $\pi^{-1}V$ are
precisely the irreducible components of the preimages of maximal weakly special 
subvarieties contained in $V$. 

Using Deligne's interpretation of Hermitian symmetric spaces in terms
of Hodge theory the representation $\rho: \G \hookrightarrow \GL(E)$
defines a polarized $\ZZ$-variation of Hodge structure on $S$. We
refer to \cite[section 2]{MoMo} for the definition of the Hodge locus
of $X$ and $S$. Recall that an irreducible analytic subvariety $M$ of $X$
or $S$ is said to be Hodge generic if it is not contained in the
Hodge locus. If $M$ is not irreducible we say that $M$ is Hodge
generic if all the irreducible components of $M$ are Hodge generic.

Let $V' \subset V$ be the Zariski closure of $\pi(Y)$, as $Y$ is
analytically irreducible it easily follows that $V'$ is
irreducible. Replacing $V$ by $V'$ we can without loss of generality
assume that $\pi(Y)$ is not contained in a proper algebraic subvariety
of $V$. We now have to show that $\pi(Y)=V$ and $V$
is an arithmetic subvariety of $S$.

Since the group $\G$ is adjoint, it is a direct product
$$\G= \G_1 \times \dots \times \G_r$$
where the $\G_i$'s are the $\QQ$-simple factors of $\G$. This induces
decompositions
\begin{equation*}
G = \prod_{i=1}^r G_i, \quad
X = \prod_{i=1}^r X_i,\quad
\G(\ZZ) = \prod_{i=1}^r \G_i(\ZZ), \quad
\Ga = \prod_{i=1}^r \Gamma_i, \quad
S = \prod_{i=1}^r S_i,
\end{equation*}
where $G_i$ is a group of Hermitian type, $X_i$ its associated
Hermitian symmetric domain, $\Ga_i$ is an arithmetic lattice in $G_i$,
$S_i := \Gamma_i \backslash X_i$ is the associated arithmetic variety
and $\pi_i: X_i \lo S_i$ the associated uniformization map.

Our main Theorem~\ref{AL} is then a consequence of the following:
\begin{theor} \label{t1}
Let $\wt{V}$ be the an analytic irreducible component of $\pi^{-1}V$ containing $Y$.
In the situation described above, after, if necessary, reordering the factors, 
one has
$$
\wt{V} =X_1 \times \wt{V_{>1}}
$$
where $\wt{V_{>1}}$ is an analytic subvariety of $X_{2}\times \dots
\times X_r$ (in particular if $r= 1$ then $\wt{V} =X_1 =X$).
\end{theor}

We first show:

\begin{prop}
Theorem~\ref{t1} implies the main Theorem~\ref{AL}.
\end{prop}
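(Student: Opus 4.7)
The plan is to derive Theorem~\ref{AL} from Theorem~\ref{t1} by induction on the number $r$ of $\QQ$-simple factors of $\G$.

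For $r=1$, Theorem~\ref{t1} forces $\wt V = X_1 = X$, so $\pi^{-1}V = X$ and $V=S$; the conclusion is then immediate, since $S$ is weakly special and $Y = X = \pi^{-1}V$.

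For $r>1$, the reduction preceding Theorem~\ref{t1} lets us assume that $V$ is the Zariski closure of $\pi(Y)$. Theorem~\ref{t1} then gives, after reordering, $\wt V = X_1 \times \wt{V_{>1}}$. Taking the Zariski closure of $\pi(\wt V) = S_1 \times \pi_{>1}(\wt{V_{>1}})$, which contains $\pi(Y)$ and is Zariski dense in $V$, one obtains $V = S_1 \times V_{>1}$ for an irreducible algebraic subvariety $V_{>1}$ of $S_{>1} := S_2 \times \cdots \times S_r$. Define $Y_{>1} \subseteq \cX_{>1}$ to be the Zariski closure of $\mathrm{pr}_{>1}(Y)$, where $\mathrm{pr}_{>1}\colon \cX \to \cX_{>1}$ is the algebraic projection. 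Since $\mathrm{pr}_{>1}(Y)$ is constructible and Zariski dense in the irreducible algebraic variety $Y_{>1}$, it contains a non-empty Zariski open subset of $Y_{>1}$ that is analytically dense in $Y_{>1}$; as this open set lies in the analytically closed $\wt{V_{>1}}$, so does $Y_{>1}$. Hence $X_1 \times Y_{>1} \subseteq \pi^{-1}V$, and maximality of $Y$ forces $Y = X_1 \times Y_{>1}$. The same maximality argument shows that $Y_{>1}$ is itself a maximal irreducible algebraic subvariety of $\pi_{>1}^{-1}V_{>1}$, and that $V_{>1}$ equals the Zariski closure of $\pi_{>1}(Y_{>1})$.

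The inductive hypothesis applied to $S_{>1}$ then gives that $V_{>1}$ is weakly special in $S_{>1}$ and that $Y_{>1}$ is an irreducible component of $\pi_{>1}^{-1}V_{>1}$. Since products of weakly special subvarieties are weakly special (the property of having totally geodesic smooth locus is preserved under products), $V = S_1 \times V_{>1}$ is weakly special in $S$ and $Y = X_1 \times Y_{>1}$ is an irreducible component of $\pi^{-1}V$. The two maximality assertions of Theorem~\ref{AL} (that the reduced $V$ is maximal among weakly special subvarieties contained in the original $V$, and the converse direction) follow formally from maximality of $Y$ together with the algebraicity of irreducible components of preimages of weakly special subvarieties. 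The principal technical subtlety is the step $Y_{>1} \subseteq \pi_{>1}^{-1}V_{>1}$, which relies on the standard comparison between Zariski and analytic closure for constructible subsets of irreducible complex algebraic varieties; modulo this point the argument is a routine induction.
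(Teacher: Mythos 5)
Your overall route is genuinely different from the paper's: you induct on the number of $\QQ$-simple factors and feed the complementary factors into the inductive hypothesis (Theorem~\ref{AL} for $r-1$ factors), whereas the paper does no such induction; it takes $t$ maximal with $\wt{V}=X_1\times\dots\times X_t\times\wt{V_{>t}}$ and, when $\wt{V_{>t}}$ is positive dimensional, re-applies theorem~\ref{t1} to a smaller arithmetic variety attached to the Mumford--Tate group of a point $(x_{\leq t},x_{>t})$ with $x_{\leq t}$ special and $x_{>t}$ Hodge generic in $Y_{>t}$, contradicting the maximality of $t$. Since theorem~\ref{t1} is proved for every arithmetic variety, your inductive scheme is admissible in principle, and the surrounding verifications ($V=S_1\times V_{>1}$, maximality of $Y_{>1}$, $V_{>1}$ being the Zariski closure of $\pi_{>1}(Y_{>1})$, products of weakly special subvarieties being weakly special) are correct once the product decomposition of $Y$ is granted.

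There is, however, a genuine gap at that pivotal step, namely $X_1\times Y_{>1}\subseteq\pi^{-1}V$. You define $Y_{>1}$ as the Zariski closure of $\mathrm{pr}_{>1}(Y)$ and argue via constructibility, but $Y$ is an irreducible algebraic subvariety of $\cD$ in the sense of this paper, i.e.\ an analytic irreducible component of $\cD\cap\wt{Y}$: it is semi-algebraic, not constructible, and so is $\mathrm{pr}_{>1}(Y)$; Chevalley does not apply, and in general $\mathrm{pr}_{>1}(Y)$ contains no nonempty Zariski-open subset of its Zariski closure (already for a plane algebraic curve meeting a bidisc, the projection to the second disc of the piece lying in the bidisc is an analytically open bounded set whose Zariski closure is the whole affine line). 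Moreover $\wt{V_{>1}}$ is only analytically closed in $\cD_{>1}$ and is in general not algebraic, so Zariski density gives no inclusion of $Y_{>1}\cap\cD_{>1}$ into $\wt{V_{>1}}$; the Zariski closure can even have strictly larger dimension than $\mathrm{pr}_{>1}(Y)$. The fact you need ($Y=X_1\times\mathrm{pr}_{>1}(Y)$ with $\mathrm{pr}_{>1}(Y)$ algebraic; the paper asserts the analogous decomposition without proof) is true, but the correct tool is the one the paper uses for the similar point in the proof of theorem~\ref{t1}: apply lemma~\ref{contained} to the semi-algebraic set $W:=X_1\times\mathrm{pr}_{>1}(Y)$ inside the analytic set $Z:=\wt{V}$ to obtain an irreducible complex algebraic $\wt{Y}'$ with $W\subseteq\wt{Y}'\cap\cD\subseteq\wt{V}\subseteq\pi^{-1}V$; the set $W$, being the image of the irreducible analytic set $X_1\times Y$ under $(x_1,(y_1,y_{>1}))\mapsto(x_1,y_{>1})$, lies in a single analytic irreducible component of $\wt{Y}'\cap\cD$, that component contains $Y$ and is an irreducible algebraic subvariety of $\pi^{-1}V$, so maximality of $Y$ forces it to equal $Y$; hence $Y=X_1\times\mathrm{pr}_{>1}(Y)$, and $\mathrm{pr}_{>1}(Y)$, now closed analytic and semi-algebraic in $\cD_{>1}$, is algebraic by lemma~\ref{lemB1}. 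With that repair your induction goes through; as written, the step fails.
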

\begin{proof}
Let $t$, $1 \leq t \leq r$, be the largest integer such that, after
reordering the factors if necessary, we have:
$$
\wt{V}  =X_1 \times \dots \times X_t \times \wt{V_{>t}}
$$
with $\wt{V_{>t}}$ an analytic irreducible subvariety of $X_{t+1}\times \dots
\times X_r$ which does not (after reordering the factors if necessary)
decompose into a product $X_{t+1} \times V_{>t+1}$.

In this case necessarily one has:
$$
Y=X_1 \times \dots \times X_t \times Y_{>t}
$$
where $Y_{>t}$ is a maximal algebraic subset of $\wt{V_{>t}}$.

Suppose that $\dim_\CC(\wt{V_{>t}}) > 0$. 
Let $x_{\leq t}$ be a special point on $X_1 \times \dots \times X_t$ 
and $x_{>t}$ be a Hodge generic point of $Y_{>t}$.
Let $\bH \subset \G$ be the Mumford-Tate group of the point $(x_{\leq
  t},x_{>t})$ of $X$ and let $X_H \subset X$ be the
$\bH(\RR)$-orbit of $x$. Replace $G$ by $H$ the group of
biholomorphisms of $X_H$, $X$ by $X_H$, $\G$ by
$\bH^\ad$, $\Gamma$ by $\Gamma_H$ the projection of $\bH(\ZZ)$ on
$H$, $S$ by $S_H:= \Gamma_H \backslash X_H$, $\pi: X \lo S$ by $\pi_H:
X_H \lo S_H$, $V$ by $V_H:= \pi_H(x_{\leq t} \times \wt{V_{>t}})$ and
$Y$ by $x_{\leq t} \times Y_{>t}$ and apply Theorem~\ref{t1} for these
new data: this shows that there exists $t' >t+1$ such that 
$\wt{V_{>t}} = X_{t+1}\times\dots \times X_{t'} \times
\wt{V_{>t'}}$. This contradicts the maximality of $t$.

Hence $\wt{V_{>t}}$ is a point $(x_{t+1}, \dots, x_r)$. Thus
$$\wt{V} = X_1 \times \dots \times X_t \times (x_{t+1}, \dots, x_r)$$
is weakly special, in particular algebraic, hence by maximality 
$$Y = \wt{V}= X_1 \times \dots \times X_t \times (x_{t+1}, \dots, x_r)$$ and $Y$ is weakly special.

\end{proof}

Let us prove theorem \ref{t1}.
Let $\bH_Y$ be the maximal connected $\QQ$-subgroup in the stabiliser of $Y$ in $\G(\RR)$. 
By Theorem~\ref{stab} the group $\bH_Y$ is a non-trivial algebraic subgroup of $\G$.

\begin{lem} \label{HYV}
The group $\bH_Y(\QQ)$ stabilises $\wt{V}$.
\end{lem}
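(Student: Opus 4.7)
The plan is to algebraize the stabilization question via the Borel embedding $\cD \hookrightarrow X^\vee$ into the compact dual, where the $\G$-action is algebraic, and then transfer back to the analytic setting. Throughout, set $\Delta := \G(\ZZ)\cap \Theta_Y$.

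First I would show that a finite-index subgroup $\Delta_0$ of $\Delta$ stabilizes $\wt V$ itself. For $\gamma \in \Delta$, the translate $\gamma\wt V$ is an analytic irreducible component of $\pi^{-1}V$ (since $\gamma \in \Gamma$) containing $\gamma Y = Y$. Fixing $y_0 \in Y$, the locally finite family of analytic irreducible components of $\pi^{-1}V$ has only finitely many members passing through $y_0$; hence the set $\mathcal{C}_Y$ of analytic irreducible components of $\pi^{-1}V$ containing $Y$ is finite. The group $\Delta$ acts on $\mathcal{C}_Y$ and the stabilizer $\Delta_0$ of $\wt V$ has finite index in $\Delta$, so its Zariski closure in $\G$ still has neutral component $\bH_Y$.

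Next I would algebrize. Let $\wt V^{\mathrm{Zar}}$ denote the Zariski closure of $\wt V$ in $X^\vee$. Since $\G(\CC)$ acts algebraically on $X^\vee$, the stabilizer $L:=\mathrm{Stab}_{\G}(\wt V^{\mathrm{Zar}})$ is an algebraic subgroup of $\G$. From $\Delta_0 \subset \mathrm{Stab}(\wt V) \subset L$ and the Zariski-closedness of $L$, one deduces $\bH_Y \subset L$. Hence $\bH_Y(\RR)$ preserves $\wt V^{\mathrm{Zar}}\cap \cD$ and permutes its analytic irreducible components. To pin down $\wt V$ among these, pick $y \in \wt V$ smooth in $\wt V^{\mathrm{Zar}}$ and lying outside every other analytic irreducible component of $\wt V^{\mathrm{Zar}}\cap \cD$; on some neighborhood $U$ of $y$ we have $\wt V^{\mathrm{Zar}}\cap \cD\cap U = \wt V\cap U$. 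For $h\in \bH_Y(\RR)$ close to $e$, $hy\in U\cap h\wt V$, so the irreducible component $h\wt V$ meets $U$ and must equal $\wt V$. The map $h\mapsto h\wt V$ is locally constant, and constant on the connected group $\bH_Y(\RR)^0$, giving $\bH_Y(\RR)^0 \subset \mathrm{Stab}(\wt V)$.

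Finally I would extend from $\bH_Y(\RR)^0$ to $\bH_Y(\QQ)$. The set $\mathrm{Stab}(\wt V)\cap \bH_Y(\RR)$ is an open subgroup of $\bH_Y(\RR)$ (containing $\bH_Y(\RR)^0$), hence a union of real connected components, and it also contains the Zariski-dense subgroup $\Delta_0$. To conclude, one writes an arbitrary $h \in \bH_Y(\QQ)$ as $h = \delta \cdot h_0$ with $\delta \in \Delta_0$ and $h_0 \in \bH_Y(\RR)^0$; both factors then stabilize $\wt V$, whence $h\wt V = \wt V$. The main obstacle, and the hardest part of the argument, is precisely this last step: one must verify that $\Delta_0$ meets every real connected component of $\bH_Y(\RR)$ that contains a $\QQ$-point, which requires invoking Borel density and the structure theory of real points of connected $\QQ$-algebraic groups to relate $\pi_0(\bH_Y(\RR))$ to the images of $\bH_Y(\ZZ)$ and $\bH_Y(\QQ)$.
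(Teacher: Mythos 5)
Your first step (a finite--index subgroup $\Delta_0$ of $\G(\ZZ)\cap\Theta_Y$ permutes the finitely many analytic irreducible components of $\pi^{-1}V$ through $Y$, hence stabilises $\wt{V}$) is fine, but the central algebraization step fails. You form the Zariski closure $\wt{V}^{\mathrm{Zar}}$ of $\wt{V}$ in $X^\vee$ and then treat $\wt{V}$ as an analytic irreducible component of $\wt{V}^{\mathrm{Zar}}\cap\cD$, pinning it down by a neighbourhood $U$ of a point $y$ with $\wt{V}^{\mathrm{Zar}}\cap\cD\cap U=\wt{V}\cap U$. But $\wt{V}$ is an irreducible component of $\pi^{-1}V$ and is in general a \emph{transcendental} analytic subvariety of $\cD$: its Zariski closure in $X^\vee$ has dimension strictly larger than $\dim\wt{V}$ (for $V$ Hodge generic one expects $\wt{V}^{\mathrm{Zar}}=X^\vee$, so your $L$ is all of $\G$). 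Since $\cD$ is open in $X^\vee$, every analytic irreducible component of $\wt{V}^{\mathrm{Zar}}\cap\cD$ has dimension equal to $\dim\wt{V}^{\mathrm{Zar}}$, so no point of $\wt{V}$ admits a neighbourhood with your local equality unless $\wt{V}$ is already algebraic --- an assumption of essentially the same strength as the theorem being proved. Hence ``$\bH_Y(\RR)$ preserves $\wt{V}^{\mathrm{Zar}}$'' gives no control on $\wt{V}$, and $\bH_Y(\RR)^0\subset\mathrm{Stab}(\wt{V})$ is not established. Your last step is also incomplete by your own admission: the decomposition $h=\delta\cdot h_0$ with $\delta\in\Delta_0$, $h_0\in\bH_Y(\RR)^0$ needs $\bH_Y(\QQ)\subset\Delta_0\cdot\bH_Y(\RR)^0$, which Borel density alone does not give.

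The paper's proof is a short Hecke--correspondence and dimension argument that uses exactly the two inputs your proposal never touches: the \emph{rationality} of $h$ and the standing reduction, made at the beginning of the final section, that $V$ is the Zariski closure of $\pi(Y)$. Namely, if $h\in\bH_Y(\QQ)$ and $h\wt{V}\neq\wt{V}$, then since $\bH_Y\subset\Theta_Y$ one has $Y\subset\wt{V}\cap h\wt{V}$; choosing an analytic irreducible component $\wt{V'}$ of this intersection containing $Y$, one has $\dim_\CC\wt{V'}<\dim_\CC\wt{V}$. Because $h$ is a $\QQ$-point, $\pi(h\wt{V})$ lies in the Hecke translate $T_h(V)$, which is an \emph{algebraic} subvariety of $S$, so $\pi(\wt{V'})$ lies in an irreducible component $V'$ of $V\cap T_h(V)$ with $\dim_\CC V'<\dim_\CC V$; since $\pi(Y)\subset V'$, this contradicts the Zariski density of $\pi(Y)$ in $V$. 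The genuine content of the lemma is precisely this passage from integral to rational group elements, which your argument replaces by the invalid analytic continuation through $\wt{V}^{\mathrm{Zar}}$; to repair your approach you would need to bring in the Hecke translate $T_h(V)$ (or some substitute for the rationality of $h$) and the minimality of $V$.
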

\begin{proof}
Suppose there exists $h\in \bH_Y(\QQ)$ such that
$$
\wt{V}\not= h \wt{V}\;\;.
$$
As $Y$ is contained in $\wt{V}\cap h \wt{V}$ and $Y$ is irreducible,
we can choose an analytic irreducible component $\wt{V'}$ of $\wt{V}\cap h\wt{V}$
containing $Y$. Notice that $\pi(\wt{V'})$ is an irreducible component, say $V'$, of 
 $V \cap T_h(V)$. As $\dim_\CC(\wt{V'}) < \dim_\CC(\wt{V})$, we have that 
$\dim_\CC(V') < \dim_\CC(V)$. 

As $\pi(Y)\subset V'$, this 
contradicts the assumption that $\pi(Y)$ is Zariski dense in $V$.
\end{proof}

Choose a Hodge generic point $z$ of $V^{\sm}$ (smooth locus of $V$) and a point $\wt{z}$ 
of $\wt{V}$ lying over $z$. Let 
$$
\rho^{\mon} \colon \pi_1(V^{\sm}, z) \lto \GL(E_\ZZ)
$$
be the corresponding monodromy representation. 
We let $\Gamma_V \subset \G(\ZZ)$ be the image of $\rho$.  By usual topological Galois theory the
group $\Gamma_V$ is the subgroup of $\G(\ZZ)$ stabilising $\wt{V}$
(cf. section 3 of \cite{MoMo}), in particular $\Gamma_V$ contains
$\bH_Y(\ZZ)$.

By Deligne's monodromy theorem (see Theorem 1.4 of \cite{MoMo}), 
the connected component of the identity $\bH^\mon$ of the Zariski
closure $\overline{\Gamma_V}^{\Zar, \QQ}$ of $\Gamma_V$ in $\G$ is a normal subgroup of
$\G$. As $\G$ is semi-simple of adjoint type, after reordering the factors we may assume that 
$\bH^\mon$ coincides with $\G_1 \times \dots \times \G_t \times \{
1 \}$ for some integer $t\geq 1$. In particular $\bH_Y \subset \G_1 \times \dots \times \G_t \times \{
1 \}$.

We claim that $\Gamma_V$ normalises $\bH_Y$. Let $\gamma \in
\Gamma_V$.  Consider the $\QQ$-algebraic group $\bF$ generated by $\bH_Y$ and 
$\gamma \bH_Y \gamma^{-1}$. Then $\bF(\RR)^+ \cdot \wt{V} = \wt{V}$,
where $\bF(\RR)^+$ denotes the connected component of the identity of $\bF(\RR)$. Hence
$\bF(\RR)^+ \cdot Y \subset \wt{V}$.  By Lemma~\ref{contained} there exists an
irreducible (complex) algebraic subvariety $\ti{Y}$ of $\ti{V}$
containing $U$, hence $Y$. By maximality of $Y$ one has $\ti{Y}=Y$ hence
$$
\bF(\RR)^+ \cdot Y = Y.
$$
By maximality of $\bH_Y$, we have $\bF = \bH_Y$. This proves the claim.

As $\bH_Y$ is normalised by $\Gamma_V$, it is normalised by $\bH^\mon =\G_1 \times \dots \times \G_t
\times \{1 \}$. It follows that 
(after possibly reordering factors)
$\bH_Y$ contains $\G_1  \times \{ 1 \}$.

The fact that $\bH_Y(\RR)$ stabilises $\wt{V}$ shows (by taking the $\bH_Y(\RR)$-orbit of any point of $\wt{V}$) that $\wt{V} = X_1 \times \wt{V}_{>1}$.
This concludes the proof of Theorem~\ref{t1} and hence of Theorem~\ref{AL}.

\appendix

\section{Definability} \label{defi}

\subsection{About Theorem~\ref{fset}}
Let $\cR$ be any
fixed o-minimal expansion of $\RR$ (in our case $\cR = \RR_{\an,
  \exp}$).
Recall \cite[chap.10]{VDD} that a {\em definable manifold} of dimension $n$
is an equivalence class (for the usual relation) of triple $(X, X_i, \phi_i)_{i \in I}$ where $\{X_i : i \in I\}$ is
a finite cover of the set $X$ and for each $i \in I$:
\begin{itemize}
\item[(i)] we have injective maps $\phi_i: X_i \lto \RR^n$ such that
  $\phi_i(X_i)$ is an open, definably connected, definable set.
\item[(ii)] each $\phi(X_i\cap X_j)$ is an open definable subset of
  $\phi_i(X_i)$.
\item[(iii)] the map $\phi_{ij}: \phi_i(X_i \cap X_j) \lto \phi_j(X_i
  \cap X_j)$ given by $\phi_{ij}= \phi_j \cap \phi_i^{-1}$ is a
  definable homeomorphism for all $j \in I$ such that $X_i \cap X_j
  \not = \emptyset$.
\end{itemize}

We say that a subset $Z\subset X$ is definable (resp. open or closed)
if $\phi_i(Z \cap X_i)$ is a definable (resp. open or closed) subset
of $\phi_i(X_i)$ for all $i \in I$. A definable map between abstract
definable manifolds is a map whose graph is a definable subset of the
definable product manifold.

Notice in particular that $X=\proj^n\CC$ has a canonical structure of a
definable manifold (for any $\cR$): take $X_i = \CC^n = \{[z_o,
\dots, z_{i-1}, 1, z_{i+1}, \dots,z_n] \in \proj^n\CC\}$, $0 \leq i
\leq n$ where we identify $\CC^n$ with $\RR^{2n}$. As a corollary any complex
quasi-projective variety is canonically a definable manifold. This
apply in particular to $S$. In particular the statement of
Theorem~\ref{fset} has an intrinsic meaning.

\section{Algebraic subvarieties of $X$} \label{algebra}
Recall from \cite[section 2.1]{U2} that a realisation $\cX$
  of $X$ for $\G$ is any analytic subset of a complex quasi-projective
  variety
  $\wt{\cX}$, with a transitive holomorphic action of $\G(\RR)$ on
  $\cX$ such that for
  any $x_0 \in \cX$ the orbit map $\psi_{x_{0}} : \G(\RR) \lo \cX$
  mapping $g$ to $g\cdot x_0$ is semi-algebraic and identifies $\G(\RR)/K_{\infty}$
  with $X$. A morphism of realisations is a $\G(\RR)$-equivariant
  biholomorphism. By \cite[lemma 2.1]{U2} any realisation of $X$ has a canonical semi-algebraic
  structure and any morphism of realisations is semi-algebraic. Hence
  $X$ has a canonical semi-algebraic structure.

Let $\cX$ be a realisation of $X$ for $\G$. A subset $Y\subset
\cX$ is called an {\em irreducible algebraic subvariety} of $\cX$ if $Y$ is an
irreducible component of the analytic set $\cX \cap \wt{Y}$ where
$\wt{Y}$ is an algebraic subset of $\wt{\cX}$. By \cite[section
2]{FL} the set $Y$ has only finitely many analytic irreducible
components and these components are semi-algebraic. An {\em algebraic
  subvariety} of $\cX$ is defined to be a finite union of irreducible
algebraic subvarieties of $\cX$. 

\begin{lem} \label{lemB1}
A subset $Y$ of $\cX$ is algebraic if and only if $Y$ is a closed complex
analytic subvariety of $\cX$ and semi-algebraic in $\cX$.
\end{lem}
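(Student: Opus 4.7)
The forward direction follows from the definition. If $Y$ is algebraic, write $Y = \bigcup_{i=1}^k Y_i$ where each $Y_i$ is an irreducible component of $\cX \cap \wt{Y_i}$ for some algebraic $\wt{Y_i} \subset \wt{\cX}$. Each $\wt{Y_i}$ is closed and semi-algebraic in $\wt{\cX}$, so $\cX \cap \wt{Y_i}$ is a closed complex analytic semi-algebraic subset of $\cX$; by the result of Fortuna--\L{}ojasiewicz cited just above in the text, its finitely many analytic irreducible components, including $Y_i$, are themselves semi-algebraic. Taking finite unions preserves both properties, so $Y$ is closed analytic and semi-algebraic.

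For the converse, let $Y \subset \cX$ be closed complex analytic and semi-algebraic. By the same finiteness statement applied to $Y$, I reduce to the case where $Y$ is analytically irreducible of complex dimension $d$. Let $\wt{Y} \subset \wt{\cX}$ be the complex Zariski closure of $Y$: an irreducible complex algebraic subvariety of $\wt{\cX}$ with $\dim_\CC \wt{Y} \geq d$ trivially. The essential claim is the reverse inequality $\dim_\CC \wt{Y} \leq d$. Granting it, $Y$ sits inside the closed analytic subvariety $\cX \cap \wt{Y}$ of $\cX$, whose irreducible components have complex dimension at most $d$; the irreducibility of $Y$ then forces $Y$ to be a full irreducible component of $\cX \cap \wt{Y}$, exhibiting $Y$ as algebraic.

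The main obstacle is the bound $\dim_\CC \wt{Y} \leq d$. Without semi-algebraicity this can fail badly — the graph of $e^z$ in $\CC^2$ is complex analytic of dimension $1$ and Zariski dense in $\CC^2$ — so both definability and complex analyticity must be combined. I would argue locally at a smooth point $p \in Y$: choose a Zariski open affine neighborhood $U \subset \wt{\cX}$ of $p$ small enough that $Y \cap U$ is a closed complex submanifold of $U$ of complex dimension $d$, still semi-algebraic in $U$. By a theorem of Peterzil and Starchenko — a closed complex analytic subset of a complex algebraic variety that is definable in an o-minimal structure is itself a complex algebraic subvariety — the set $Y \cap U$ is a complex algebraic subvariety of $U$ of complex dimension $d$. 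Its complex Zariski closure $Z$ in $\wt{\cX}$ is then a complex algebraic subvariety of $\wt{\cX}$ of complex dimension $d$ containing the non-empty open subset $Y \cap U$ of $Y$. Since $Y$ is analytically irreducible, $Z \cap Y = Y$, hence $Y \subset Z$; by minimality of the Zariski closure, $\wt{Y} \subset Z$, yielding $\dim_\CC \wt{Y} \leq d$ as required.
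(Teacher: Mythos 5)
Your forward direction matches the paper's ("clear") and is fine. For the converse you take a genuinely different route from the paper: the paper forms the \emph{real} Zariski closure $\wt{Y}$ of $Y$ inside $\Res_{\CC/\RR}\wt{\cX}$ and uses the Gauss map together with real-analytic continuation to show that all tangent spaces of $\wt{Y}^{\sm}$ are complex, hence that $\wt{Y}$ is a complex algebraic subvariety of $\wt{\cX}$ in which $Y$ sits as an irreducible component; you instead try to bound the dimension of the complex Zariski closure by an o-minimal Chow-type theorem of Peterzil and Starchenko.

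There is a genuine gap in your key step. The Peterzil--Starchenko theorem you invoke requires the set to be a \emph{closed} complex analytic subset of the ambient algebraic variety. But $Y$ is closed only in the open domain $\cX$, not in $\wt{\cX}$, and no Zariski-open affine $U \subset \wt{\cX}$ containing a smooth point $p$ of $Y$ will make $Y \cap U$ closed in $U$: a Zariski-open set is Euclidean-dense in $\wt{\cX}$, so $U$ always contains boundary points of $\cX$, and the Euclidean closure of $Y$ typically meets $\partial\cX$ (think of a geodesic in the unit disk). Thus $Y\cap U$ is a closed analytic subset of $\cX\cap U$ but not of $U$, and the theorem does not apply. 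You would need either a version of the Chow theorem for locally closed definable analytic sets in a definable open domain (which you would have to state and justify), or an argument along the lines of the paper's: pass to the real Zariski closure, note that $Y$ contains a nonempty open subset of it, and use the real-analytic Gauss map plus analytic continuation to show the tangent planes are everywhere complex, forcing $\dim_\CC\wt{Y}=d$.
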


\begin{proof}
Let $Y \subset X$ be a closed complex analytic subvariety of $\cX$, semi-algebraic in
$\cX$. Without loss of generality we can assume that $Y$ is irreducible
as an analytic subvariety, of dimension $d$. Consider the real Zariski-closure $\wt{Y}$ of $Y$ in the real
algebraic variety $\Res_{\CC/\RR} \wt{\cX}$, where $\Res_{\CC/\RR}$ 
 denotes the Weil restriction of scalars from $\CC$ to $\RR$. 
Let us show that $\wt{Y}_\RR$ has a canonical structure of a complex
subvariety of $\wt{\cX}$. Choose an affine open cover $(\wt{\cX}_i)_{i
  \in I} \subset \AAA^{n_{i}}$ of $\wt{\cX}$ and denote by $\wt{Y}_i$ the intersection $\wt{Y}
\cap\wt{\cX}_i$. Let $i \in I$ such that $\wt{Y}_i$ is non-empty. As $Y$
  is semi-algebraic, $Y$ is open in
  $\wt{Y}$ for the Hausdorff topology, hence $Y_i:= Y \cap \wt{\cX}_i$
  is non-empty and open in
  $\wt{Y}_i$ for the Hausdorff topology. Consider the Gauss map
  $\varphi_i$ from the smooth part $\wt{Y}_i^{\sm}$ of
  $\wt{Y}_i$ to the real Grassmannian $\Gr^{2d, 2 n_i}$ of real $2d$-planes of
  $\Res_{\CC/\RR} \AAA^{n_{i}}$ associating to a point its tangent space. The map $\varphi_i$ is real analytic
  and its restriction to the open subset $Y_i^\sm$ of $\wt{Y}^\sm_i$ takes
  values in the closed real analytic subvariety $\Gr_\CC^{d, n_i} \subset \Gr^{2d, 2 n_i}$ of
  complex $d$-planes of $\AAA_\CC^{n_{i}}$. By analytic
  continuation $\varphi_i$ takes values in $\Gr_\CC^{d, n_i}$. Hence
  $\wt{Y}_i$ is a complex algebraic subvariety of $\AAA^{n_{i}}$. As
  this is true for all $i \in I$, $\wt{Y}$ is a complex algebraic
  subvariety of $\wt{\cX}$. As $Y \subset \wt{Y}$ is open and $Y$ is
 closed analytically irreducible in $\cX$, it follows that $Y$ is an
 irreducible component of $\cX \cap \wt{Y}$, hence algebraic.

The other implication is clear.
\end{proof}

As any morphism of realisations is an analytic biholomorphism and
semi-algebraic the previous lemma implies immediately:
\begin{cor}
Let $\varphi: \cX_1 \lo \cX_2$ be a morphism of realisations of
$X$. A subset $Y_1$ of $\cX_1$ is algebraic if and only if its image
$Y_2:= \varphi(Y_1) \subset \cX_2$ is algebraic. 
\end{cor}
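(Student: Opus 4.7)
The plan is to reduce everything to Lemma~\ref{lemB1}, which gives an intrinsic characterisation of algebraicity in a realisation: a subset is algebraic if and only if it is simultaneously a closed complex analytic subvariety and semi-algebraic. Once this intrinsic description is in hand, the corollary becomes a check that both of these properties are preserved under a morphism of realisations, which is exactly what the hypothesis on $\varphi$ provides.

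First I would recall that, by the definition of a morphism of realisations recalled at the beginning of the appendix and by \cite[lemma 2.1]{U2}, the map $\varphi \colon \cX_1 \lo \cX_2$ is a $\G(\RR)$-equivariant analytic biholomorphism which is also semi-algebraic. The inverse $\varphi^{-1} \colon \cX_2 \lo \cX_1$ is again a $\G(\RR)$-equivariant analytic biholomorphism, so it too is a morphism of realisations and, by the same citation, semi-algebraic. Consequently both $\varphi$ and $\varphi^{-1}$ carry closed complex analytic subvarieties to closed complex analytic subvarieties (because they are biholomorphisms of complex analytic spaces), and they carry semi-algebraic subsets to semi-algebraic subsets (because the image of a semi-algebraic set under a semi-algebraic map is semi-algebraic, by Tarski--Seidenberg applied in the real algebraic ambient structure).

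With these two stability statements, the corollary is immediate. If $Y_1 \subset \cX_1$ is algebraic, then by Lemma~\ref{lemB1} it is a closed complex analytic subvariety of $\cX_1$ which is semi-algebraic; applying $\varphi$, the image $Y_2 = \varphi(Y_1)$ is a closed complex analytic subvariety of $\cX_2$ which is semi-algebraic, and so is algebraic by the converse direction of Lemma~\ref{lemB1}. For the reverse implication one applies the same argument to $\varphi^{-1}$, since $\varphi^{-1}(Y_2) = Y_1$.

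There is no real obstacle here: the nontrivial content has been packaged inside Lemma~\ref{lemB1} (whose proof uses a Gauss-map/analytic-continuation argument to produce a complex algebraic real Zariski closure). The only genuine thing to verify, beyond invoking that lemma, is the symmetry between $\varphi$ and $\varphi^{-1}$, which follows formally from the fact that morphisms of realisations form a groupoid of semi-algebraic biholomorphisms.
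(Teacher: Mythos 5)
Your argument is correct and is precisely the paper's intended proof: the authors introduce the corollary with the one-line remark that a morphism of realisations is an analytic biholomorphism and semi-algebraic, so Lemma~\ref{lemB1} immediately gives the result. Your slightly more detailed unpacking (noting that $\varphi^{-1}$ is again a morphism of realisations, hence semi-algebraic, so both directions follow by symmetry) is exactly what the paper leaves implicit.
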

This defines the notion of algebraic subsets of $X$.

\begin{lem} \label{contained}
Let $\cX$ be a realisation of a Hermitian symmetric domain $X$.
Let $Z \subset \cX \subset \CC^n$ be a complex analytic subvariety and $W \subset Z$ a
semi-algebraic set. There exists an
irreducible complex algebraic subvariety $Y \subset \CC^n$ such that
$$
W \subset Y \cap X \subset Z
$$

\end{lem}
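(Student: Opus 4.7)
The plan is to construct $Y$ by first producing a real algebraic subvariety of $\CC^{n}$ containing $W$ and then upgrading it to a complex algebraic one via a Gauss-map argument. Reduce at the outset to the case $W$ connected (this is allowed since a semi-algebraic set has only finitely many connected components; at the end take $Y$ containing one component and iterate). Let $\wt{W}\subset\CC^{n}$ be the real Zariski closure of $W$: it is a real algebraic subvariety, and by passing to a real-algebraic irreducible component $\wt{W}_{0}$ meeting $W$ in a set of maximal dimension, we may assume $W$ is contained in (and contains an open subset of) a real-algebraic irreducible set $\wt{W}_{0}$. I will take $Y$ to be a complex-irreducible component of $\wt{W}_{0}$ once $\wt{W}_{0}$ is shown to be complex algebraic; the inclusions $W\subset Y$ and $W\subset Y\cap\cX$ are then automatic, and it remains to force $Y\cap\cX\subset Z$.

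To show that $\wt{W}_{0}$ is complex algebraic, I would mimic the Gauss-map step in the proof of Lemma~\ref{lemB1}. Let $k=\dim_{\RR}\wt{W}_{0}$ and consider the real-analytic Gauss map $\varphi\colon\wt{W}_{0}^{\rm sm}\to\Gr^{k,2n}_{\RR}$, sending a smooth point to its tangent $k$-plane. At any smooth point $p\in W\cap Z^{\rm sm}$ lying in the top-dimensional stratum of both $W$ and $\wt{W}_{0}$, one has $T_{p}\wt{W}_{0}=T_{p}W\subset T_{p}Z$, so $\varphi(p)$ lies in the closed real-algebraic subvariety of $\Gr^{k,2n}_{\RR}$ consisting of real $k$-planes contained in some complex $d$-subspace of $\CC^{n}$, with $d=\dim_{\CC}\langle T_{p}W\rangle_{\CC}\leq\dim_{\CC}Z$. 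Since this subvariety is closed and $\varphi$ is real-analytic on the connected manifold $\wt{W}_{0}^{\rm sm}$, analytic continuation forces $\varphi$ to take values in that subvariety everywhere. By integrating the resulting complex-spanned distribution, $\wt{W}_{0}$ is locally contained in a complex analytic subvariety of complex dimension $d$, and the Gauss-map argument of Lemma~\ref{lemB1} then shows $\wt{W}_{0}$ is in fact complex algebraic in $\CC^{n}$. Taking $Y$ to be a complex-irreducible component of $\wt{W}_{0}$ meeting $W$ in a dense subset produces the desired irreducible complex algebraic subvariety of $\CC^{n}$.

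The main obstacle is the remaining inclusion $Y\cap\cX\subset Z$, which is not automatic from the construction. Near a generic smooth point $w_{0}\in W\cap Z^{\rm sm}$, the germ of $Y$ at $w_{0}$ agrees with the smallest complex analytic germ through $w_{0}$ containing $W$, of complex dimension $d=\dim_{\CC}\langle T_{w_{0}}W\rangle_{\CC}$; since this germ is characterised by its tangent data at $w_{0}$, which is contained in $T_{w_{0}}Z$, the germ of $Y$ at $w_{0}$ lies in the germ of $Z$ at $w_{0}$. I would then propagate this inclusion by analytic continuation along the irreducible complex-analytic component of $Y\cap\cX$ passing through $w_{0}$, concluding that this whole component is contained in $Z$. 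Carrying out this global propagation rigorously is the hard step, since it is where the specific structure of $\cX$ as a Harish-Chandra realisation of a Hermitian symmetric domain (and the tameness of the ambient analytic set $Z$ under consideration) must intervene to prevent the complex analytic closure of $W$ from escaping $Z$.
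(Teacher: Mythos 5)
The paper does not give an argument here: its ``proof'' is the one-line citation ``this is a consequence of the proof of \cite[Lemma 4.1]{PT}''. Your reconstruction of that argument contains both a genuine error at its core and, as you note yourself, a genuine gap at the end.

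The error is the claim that the real Zariski closure $\wt{W}_0$ of $W$ is complex algebraic, which the Gauss-map argument of Lemma~\ref{lemB1} is supposed to deliver. It cannot: the real Zariski closure of a semi-algebraic set is almost never a complex variety (a real segment inside a complex line $\subset Z$ has real Zariski closure a real line, not a complex curve). The Gauss-map argument in Lemma~\ref{lemB1} needs the tangent spaces of the set to \emph{be} complex-linear subspaces so that the Gauss map lands in the complex Grassmannian; here you only know each $T_p\wt{W}_0$ is \emph{contained in} some complex $d$-plane, which is strictly weaker whenever $k=\dim_\RR\wt{W}_0<2d$ (the generic case). There is also no distribution to integrate: the assignment $p\mapsto\langle T_p\wt{W}_0\rangle_\CC$ is defined only along the $k$-real-dimensional set $\wt{W}_0^{\rm sm}$, not on an open neighbourhood in $\CC^n$, so Frobenius is not available in the form you invoke, and ``taking a complex-irreducible component of $\wt{W}_0$'' is not meaningful unless $\wt{W}_0$ is already complex. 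The object you want is the \emph{complex} Zariski closure of $W$, not its real Zariski closure.

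As for the inclusion $Y\cap\cX\subset Z$, you correctly identify it as the hard step, and your germ-plus-analytic-continuation strategy is the right instinct, but the missing tool is not (as you suggest) the Hermitian-symmetric geometry of $\cX$; it is the complexification of a semi-algebraic real-analytic (Nash) local parametrization. Near a generic smooth point $w_0\in W$, choose such a chart $\phi\colon U\subset\RR^k\to W\subset\CC^n$; being real-analytic it extends holomorphically to $\phi^\CC\colon U^\CC\subset\CC^k\to\CC^n$. If $f$ is a local holomorphic defining function of $Z$ near $w_0$, then $f\circ\phi^\CC$ is holomorphic on $U^\CC$ and vanishes on the totally real slice $U$, hence vanishes identically. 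Thus the complex-analytic germ $\phi^\CC(U^\CC)$ contains $W$ near $w_0$, lies inside $Z$, and is semi-algebraic because $\phi$ is Nash; by Lemma~\ref{lemB1} (more precisely, its proof) this germ opens up to an irreducible complex algebraic $Y\subset\CC^n$, and the irreducible analytic component of $Y\cap\cX$ through $w_0$ lies in $Z$ by the identity principle. That is the content of \cite[Lemma 4.1]{PT}, and it uses nothing about $\cX$ beyond its (semi-)algebraic structure.
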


\begin{proof}
This is a consequence of the proof of \cite[lemma 4.1]{PT}.
\end{proof}

\sspace
\noindent Bruno Klingler : Universit\'e Paris-Diderot (Institut de
Math\'ematiques de Jussieu-PRG, Paris) and IUF.

\noindent email : \texttt{klingler@math.jussieu.fr}.

\sspace
\noindent
Emmanuel Ullmo : Universit\'e Paris-Sud.

\noindent email:  \texttt{ullmo@math.u-psud.fr}

\sspace
\noindent
Andrei Yafaev : University College London, Department of Mathematics.

\noindent
email : \texttt{yafaev@math.ucl.ac.uk}
\end{document}